\titleformat*{\section}{\Large\bfseries}
\titleformat*{\subsection}{\large\bfseries}
\titleformat*{\subsubsection}{\normalsize\bfseries}
\titleformat*{\paragraph}{\normalsize\bfseries}
\theoremstyle{plain}
\newtheorem{claim}{\protect\claimname}
\theoremstyle{plain}
\theoremstyle{plain}
\newtheorem{lemma}[claim]{\protect\lemmaname}
\theoremstyle{plain}
\newtheorem{theorem}[claim]{\protect\theoremname}
\theoremstyle{plain}
\theoremstyle{definition}
\theoremstyle{definition}
\theoremstyle{definition}
\providecommand{\claimname}{Claim}
\providecommand{\lemmaname}{Lemma}
\providecommand{\propositionname}{Proposition}
\providecommand{\theoremname}{Theorem}
\providecommand{\corollaryname}{Corollary} 
\providecommand{\definitionname}{Definition}
\providecommand{\assumptionname}{Assumption}
\providecommand{\factname}{Fact}
\DeclareMathOperator*{\subto}{s.t.}
\DeclareMathOperator*{\cproduct}{\operatorname*{\oplus}}
\newcommand{\OPT}{{\sf OPT}}
\newcommand{\eps}{\varepsilon}
\newcommand{\Tr}{\mathbf{Tr}}
\newcommand{\mm}[1]{{\boldsymbol{#1}}}
\newcommand{\bb}[1]{\mathbbm{#1}}
\newcommand{\cc}[1]{\mathcal{#1}}
\newcommand{\xt}[2]{{#1}^{(#2)}}
\newcommand{\eigenval}[2]{\lambda_{#1}({#2})}
\newcommand{\mmexp}{\mathbf{exp}}
\newcommand{\mmint}{\mathbf{int}}
\newcommand{\mmdiag}{\mathbf{diag}}
\newcommand{\bR}{\bb{R}}
\newcommand{\mma}{\mm{a}}
\newcommand{\mmb}{\mm{b}}
\newcommand{\mmc}{\mm{c}}
\newcommand{\mme}{\mm{e}}
\newcommand{\mmm}{\mm{m}}
\newcommand{\mmp}{\mm{p}}
\newcommand{\mmq}{\mm{q}}
\newcommand{\mmu}{\mm{u}}
\newcommand{\mmv}{\mm{v}}
\newcommand{\mmw}{\mm{w}}
\newcommand{\mmx}{\mm{x}}
\newcommand{\mmy}{\mm{y}}
\newcommand{\mmA}{\mm{A}}
\newcommand{\mmP}{\mm{P}}
\newcommand{\mmQ}{\mm{Q}}
\newcommand{\mmX}{\mm{X}}
\newcommand{\mmY}{\mm{Y}}
\newcommand{\mmgamma}{{\boldsymbol{\gamma}}}
\newcommand{\mmmu}{{\boldsymbol{\mu}}}
\newcommand{\mmxi}{{\boldsymbol{\xi}}}
\newcommand{\mmone}{\mm{1}}
\newcommand{\mmzero}{\mm{0}}
\newcommand{\R}{\mathbb{R}}
\newcommand{\T}{\mathcal{T}}
\def\bpm{\begin{pmatrix}}
\def\epm{\end{pmatrix}}
\def\bal{\begin{aligned}}
\def\eal{\end{aligned}}
\newcommand{\cj}{\mathcal{J}}
\newcommand{\bi}{\begin{itemize}}
\newcommand{\ei}{\end{itemize}}
\newcommand{\beq}{\begin{equation}}
\newcommand{\eeq}{\end{equation}}
\newcommand{\ben}{\begin{enumerate}}
\newcommand{\een}{\end{enumerate}}
\title {A Primal-Dual  
Framework for Symmetric Cone Programming}%
\author{
Jiaqi Zheng\thanks{Department of Computer Science, National University of Singapore}
\and 
Antonios Varvitsiotis\thanks{Engineering Systems and Design Pillar, Singapore University of Technology and Design}
\and
Tiow-Seng Tan\footnotemark[1]
\and 
Wayne Lin\footnotemark[2]
}
\begin{document}

\maketitle
\vspace{-1em}
\begin{abstract}
 In this paper, we introduce a  primal-dual algorithmic framework for solving Symmetric Cone Programs (SCPs),   a versatile optimization model that unifies and extends Linear, Second-Order Cone (SOCP), and Semidefinite Programming (SDP).  Our work  generalizes  the primal-dual framework for SDPs introduced by Arora and Kale \cite{AKcombinatorial},  leveraging  a recent extension of the Multiplicative Weights Update method (MWU) to symmetric cones. Going beyond  existing works, our framework can handle SOCPs and  mixed SCPs, exhibits nearly linear time complexity, and can  be effectively parallelized.  
To illustrate the efficacy of our framework, we employ it to develop approximation algorithms for two geometric optimization problems: the Smallest Enclosing Sphere problem and the Support Vector Machine problem. Our theoretical analyses demonstrate that the two algorithms compute approximate solutions 
in nearly linear running time 
and with  parallel depth  scaling  polylogarithmically with the input size.
We compare our algorithms against CGAL as well as interior point solvers applied to 
these problems. 
Experiments show that our algorithms are highly efficient when implemented on a CPU and  achieve substantial  speedups  when  parallelized on a GPU, 
allowing us  to solve large-scale instances of these problems.

\end{abstract}

{\small
{\em Keywords:} Symmetric Cone Programming, Approximation Algorithms, Geometric Optimization
}

\section{Introduction}\label{sec:intro}
A {\em linear conic program}  (LCP) is an optimization problem that minimizes a linear objective function over the intersection of a 
convex cone $\mathcal{K}$ and a finite number of halfspaces:
\begin{equation}
\label{scp}
\tag{LCP}
\begin{aligned}
    \max \quad & \mm{c} \bullet \mm{x} \\
    \subto \quad & \mm{a}_j \bullet \mm{x} \le b_j \quad \forall j \in [m]\\
    & \mm{x} \succeq_\cc{K} 0
\end{aligned}
\hspace{4em}
\begin{aligned}
    \min \quad & \mm{b}^T \mm{y}\\
    \subto \quad & \textstyle\sum_{j=1}^{m} \mm{a}_j y_j \succeq_{\cc{K}^*}  \mm{c}\\
    & \mmy \in \mathbb{R}^m_+
\end{aligned}
\end{equation} 
Here $\cc{K}^*$ denotes the dual cone of $\cc{K}$ and $\succeq_\cc{K}$ denotes the generalized inequality induced by the cone $\cc{K}$, i.e. $\mmx \succeq_\cc{K} \mmy $ iff $\mmx-\mmy\in \cc{K}$. 
Some of the most  important convex optimization models are instances of LCPs with respect to appropriate convex cones. For example, LCPs over the $d$-dimensional non-negative orthant~$\mathbb{R}^d_+$ correspond to {\em linear programs} 
{(LPs), and}
LCPs over the cone of $d\times d$ positive semidefinite (PSD) matrices $\cc{S}^d_+$  correspond to {\em semidefinite programs} (SDPs).  Additionally, LCPs related to second-order cone $\cc{Q}^{d+1}$
correspond to  {\em second-order cone programs} (SOCPs). These optimization models -- LPs, SDPs, and SOCPs -- hold a prominent place in the realm of convex optimization, finding applications in a wide array of domains.

A {\em symmetric cone} is a closed convex cone that is self-dual and homogeneous.
The non-negative orthant, the  cone of (real or Hermitian) PSD matrices, the second-order 
{cone, and Cartesian products thereof} are all examples of symmetric cones, and a {\em symmetric cone program} (SCP) is simply an LCP over a symmetric cone.
Consequently, SCPs provide a unifying framework for studying LPs, SDPs, and SOCPs.
From an algorithmic standpoint, SCPs admit self-concordant barrier functions, which facilitate the development and analysis of interior point methods (IPMs)~\cite{Fab,Nem}.

Many approximation algorithms have been introduced in the literature for LPs and SDPs based on the Multiplicative Weight Update (MWU) method over the simplex, e.g.~\cite{pst:plotkin1995fast,AHK}, and its matrix variant over the set of density matrices, e.g.~\cite{AKcombinatorial}. In this work, we focus on algorithms based on the MWU framework that are  primal-dual, in the sense that, at the end of execution, we obtain a pair of primal/dual feasible solutions with an optimality gap bounded by the desired error tolerance. An important instance of this is the primal-dual SDP framework by Arora and Kale for SDPs \cite{AKcombinatorial}. MWU-based algorithms have a running time that is nearly linear in the input size and are amenable to parallelization due to the nature of the multiplicative update.

Motivated by the success of such  methods for LPs and SDPs, in this work we try to 
understand:
\begin{center}
\parbox[c]{400pt}{\centering {\em  
Can we develop primal-dual MWU-based algorithms for symmetric cone programs that have  nearly linear time complexity and are effectively parallelizable? }}
\end{center}

In this work, we introduce the first primal-dual approximation framework for SCPs, which is detailed in Section~\ref{sec:algorithm}. 
Our framework is motivated by, and generalizes, the primal-dual framework for SDPs developed by Arora and Kale~\cite{AKcombinatorial}. To devise our framework, we rely on the recently introduced {\em symmetric cone multiplicative weight update} method (SCMWU)~\cite{scmwu} that is designed for online optimization over symmetric cones.

In contrast to existing  MWU-based algorithms designed solely for LPs or SDPs, our new framework can handle  SOCPs and, additionally, SCPs with mixed conic constraints, 
 e.g., problems featuring both linear constraints and positive-semidefinite constraints.
Compared to IPMs, which require solving systems of linear equations at each iteration,  our framework has significantly lower computational costs and can be effectively parallelized.  Additionally, we offer geometric insights into our primal-dual framework that are also applicable to the previous MWU-based frameworks for LPs~\cite{pst:plotkin1995fast} and SDPs~\cite{AKcombinatorial}.

As a practical demonstration of our framework we develop approximation algorithms for two computational geometry problems. The first one is  the {\em smallest enclosing sphere} (SES) problem {of finding the sphere of minimum radius that {encloses} an input set of spheres}, and the  second one is the {\em support vector machine} (SVM) problem {of finding a hyperplane that separates two sets of data with maximum margin}. 
Our framework allows various degrees of freedom that necessitate further specification for  each individual problem, which we provide  in Sections~\ref{sec:ses} and~\ref{sec:pd} respectively.
When applied to the SES problem, our framework yields a parallelizable approach (dual-only algorithm in this context) that computes a $(1 + \eps)$-approximate solution with a 
running time of $O(\frac{nd \log n}{\eps^2})$,
where $n$ represents the number of inputs and $d$ is the dimensionality. 
When applied to the SVM problem, our framework yields a parallelizable primal-dual algorithm that computes a $(1-\eps)$-approximate solution for SVM and a $(1 + \eps)$-approximate solution for its dual counterpart in $O(\frac{E nd\log n}{\eps^2})$ time,
where $n$ is the number of data points, $d$ is the dimensionality, and $E$ is a instance-specific parameter that measures its difficulty.

In addition to the theoretical analyses, our algorithms for SES and SVM have been implemented in both sequential (CPU) and parallel (GPU) settings. Extensive experiments have been conducted to compare them with CGAL and IPM solvers in Section~\ref{sec:experiment}. The results show that our algorithms are GPU-friendly and possess fairly large parallelism. For large-scale input instances, our GPU implementations outperform the commercial solvers.

{\bf Related work. }
Plotkin, Shmoys, and Tardos~\cite{pst:plotkin1995fast} introduced a framework based on the classical MWU method for computing approximate solutions to packing and covering LPs with a running time that is nearly linear in the input size.
The framework of Plotkin et al.~\cite{pst:plotkin1995fast} was later employed by Klein and Lu~\cite{KL96} to devise fast approximation algorithms for the {\sc Max Cut} problem, which was further extended by Arora et al.~\cite{AHK} for solving general SDPs.
These methods can be categorized as {\em primal-only} methods.
They can only find approximately feasible primal solutions that satisfy every constraint up to an {\em additive} error $\eps$, and the running time is proportional to $1/\eps^2$, which poses challenges when a strictly feasible solution is required or when $\eps$ is very small.
The problem is addressed when Arora and Kale~\cite{AKcombinatorial} introduce their {\em primal-dual} SDP framework. 
Their framework is based on the matrix multiplicative weight update method (MMWU) that has been discovered in different fields including optimization~\cite{mmwu:ben2005non}, machine learning~\cite{tsuda}, and online learning~\cite{mmwu:warmuth2006online}. 
It produces a pair of solutions to both primal and dual SDPs, which are strictly feasible and have multiplicative approximation ratios.
Together with specific combinatorial techniques, Arora and Kale also developed efficient approximation algorithms for NP-hard graph problems such as {\sc Sparsest Cut} and {\sc Balanced Separator}. 

The MWU-based frameworks (for both LPs and SDPs) rely on the ``width'' of the problem, a parameter related to the largest entry or eigenvalue of the constraints.
As a closely related area of study, the literature also contains a range of sequential and parallel algorithms that are designed for specific families of LPs and SDPs, known as positive LPs (see, e.g.~\cite{mwu-based:luby1993parallel,mwu-based:young2001sequential,mwu-based:allen2014using}) and positive SDPs~\cite{mwu-based:jain2011parallel, mwu-based:allen2016using}.
Unlike the MWU-based algorithms, the running time of these algorithms is typically independent of the width.
However, these width-independent algorithms often necessitate more sophisticated analysis, and most of the techniques developed in this line of research are not applicable to the generic problems.

\section{Preliminaries}\label{sec:jasc}

\subsection{Symmetric cones and Euclidean Jordan algebras} \label{sec:eja}

A \emph{symmetric cone} is a closed convex cone $\cc{K}$ in a finite-dimensional inner product space $\cj$ that is self-dual (i.e., $\cc{K}^* \triangleq\{\mm{y}\in \cj: \mm{y} \bullet \mm{x} \ge 0, \forall \mm{x}\in \cc{K}\} = \cc{K}
$) and homogeneous (i.e., for any $\mm{u}, \mm{v} \in \mmint(\cc{K})$
there exists an invertible linear transformation $\T:\cj \to \cj$ such that $\T(\mm{u}) = \mm{v}$ and $\T(\cc{K})=\cc{K}$). Importantly, a symmetric cone can also be characterized as the cone of squares of an \emph{Euclidean Jordan algebra} (EJA, e.g.\ see Proposition 2.5.8 in \cite{PHD:V07}){:}
\[ 
    \cc{K} = \{\mm{x}^2 \triangleq \mm{x} \circ \mm{x} : \forall \mm{x} \in \cj\}.
\]
Here, we recall that an EJA is a finite-dimensional vector space $\cj$ equipped with a bilinear product $\circ:\cj\times\cj\to\cj$ that satisfies $\mm{x}\circ \mm{y} = \mm{y}\circ \mm{x}$ and $\mm{x}^2\circ(\mm{x}\circ \mm{y}) = \mm{x}\circ (\mm{x}^2\circ \mm{y})$ for all $\mm{x}, \mm{y} \in \cj$, an inner product $\bullet$ that {satisfies} $(\mm{x} \circ \mm{y}) \bullet \mm{z} = \mm{x} \bullet (\mm{y} \circ \mm{z})$ for all $\mm{x}, \mm{y}, \mm{z}\in\cj$, and an identity element $\mm{e}$ {satisfying} $\mm{e} \circ \mm{x} = \mm{x} \circ \mm{e} = \mm{x}$ for all $\mm{x} \in \cj$. Every EJA has a finite \emph{rank}, which is defined as the largest degree of the minimal polynomial of any of its elements.

Every element $\mm{x}$ of an EJA has a {\em spectral decomposition}, in the sense that there exists unique (up to ordering and multiplicities) real numbers $\lambda_1,\cdots,\lambda_r$ and a Jordan frame $\{\mm{q}_1, \ldots, \mm{q}_r\}$ such that $\mm{x} = \sum_{i=1}^r \lambda_i \mm{q}_i$, where a Jordan frame is a collection of primitive idempotents $\mm{q}_1,\ldots,\mm{q}_r$ that satisfy $
\mm{q}_i\circ \mm{q}_j = \mm{0}\ \forall i\neq j$ and $\sum_{i=1}^r \mm{q}_i = \mm{e},$ and $r$ is the rank of the EJA. An element $\mm{q}\in\cj$ is an idempotent if $\mm{q}^2 = \mm{q}$ and is said to be primitive if it is nonzero and cannot be written as the sum of two nonzero idempotents.
The \emph{trace} of an EJA element with spectral decomposition $\mm{x}=\sum_{i}^r\lambda_i \mm{q}_i $ is given by the sum of its eigenvalues
$\Tr(\mm{x})=\sum_{i}^r\lambda_i$.
In the following, the {\em inner product} of the EJA is denoted by $\mm{x} \bullet \mm{y} = \Tr(\mm{x} \circ \mm{y})$.
The 
{cone of squares}
of an EJA is then the set of elements with non-negative eigenvalues.

The characterization of symmetric cones as the cone of squares of EJAs, taken together with the classification result for finite-dimensional EJAs (e.g.\ see \cite{BOO:FK94}), implies that all symmetric 
{cones over the real field are isomorphic to Cartesian products of 
PSD cones and
second-order cones.}
We now present some examples of symmetric cones, together with their algebraic properties which shall be important in our applications:
\begin{itemize}
    \item {\bf The non-negative orthant $\bR^{d}_{+}$.} The {EJA} product associated with $\bR^{d}_{+}$ is the component-wise product $\mm{x}\circ\mm{y} = \mmdiag(\mm{x})\ \mm{y}$, and the associated identity element is $\mm{e}=\mm{1}_d$. The rank of $\bR^{d}_{+}$ is $d$, and the spectral decomposition of an element $\mmx$ is $\sum_{i=1}^{d} x_i \mm{e}_i$, where $\mm{e}_i$ is the $i$-th standard basis  vector. The trace is then $\Tr(\mm{x}) = \sum_{i=1}^{d} x_i$.
    
    \item {\bf The second-order cone $\cc{Q}^{d+1}$.} We consider $\bR^{d + 1}$ as an  {EJA} with Jordan product  $(\mm{u}; u_0) \circ (\mm{v}; v_0) = \frac{1}{\sqrt{2}} (v_0\mm{u} + u_0\mm{v}; \mm{u}^T\mm{v} + u_0 v_0)$. The corresponding  cone of squares is the second-order cone 
    $\cc{Q}^{d+1} \triangleq \{(\mm{u}; u_0) \in \bR^{d + 1} : \|\mm{u}\|_2 \leq u_0 \}. $
    The rank of this EJA is 2 and its identity element is $\mm{e} = (\mm{0}; \sqrt{2})$.
    For an element $(\mm{u}; u_0)$ of this EJA, its eigenvalues are $\lambda_{1,2} = \frac{1}{\sqrt{2}}(u_0 \pm \|\mm{u}\|_2)$ and the corresponding idempotents are $\mmq_{1,2} = \frac{1}{\sqrt{2}}(\pm \mmv; 1)$, where $\mmv = \frac{\mmu}{\|\mmu\|}$ if $\mmu\neq \mmzero$ and any vector with unit $\ell_2$-norm otherwise. 
    The trace of an element $(\mmu; u_0)$ in this EJA is given by $\Tr(\mm{u}; u_0) = \sqrt{2}u_0$.
    
    \item {\bf The PSD cone $\cc{S}^{d}_+$.} The underlying EJA of the cone of $d\times d$ Hermitian PSD matrices is the vector space $\bR^{d(d+1)/2}$ endowed with the Jordan product $\mmx\circ\mmy = {1\over 2}{\rm vec}(\mmX\mmY + \mmY\mmX)$, where $\mmX = {\rm mat}(\mmx)$ and $\mmY = {\rm mat}(\mmy)$. 
    The rank of the EJA is $d$ and its identity element $\mme = {\rm vec}(I)$, where $I$ is the identity matrix.
    For an element $\mmx$ in the EJA, its spectral decomposition is $\mmx = \sum_{i=1}^d \lambda_i{\rm vec}(\mmv_i\mmv_i^T)$, where $\lambda_i$ and $\mmv_i$ are from the eigendecomposition of ${\rm mat}(\mmx)$. The trace of $\mmx$ is simply $\Tr({\rm mat}(\mmx))$.
    
    \item {\bf Product cones.} The Cartesian products of symmetric cones are also symmetric cones. We denote the Cartesian product of $n$ symmetric cones $\cc{K}_1,\dots, \cc{K}_n$ as $\cc{K} = \cproduct_{i=1}^n \cc{K}_i$. {(We will also use $\cproduct$ to denote vector concatenations.)} The rank/trace of a product cone is defined as the sum of the ranks/traces of its components. The spectral decomposition of the product cone follows from the decomposition of each component.
\end{itemize}

The spectral decomposition allows us to define the \emph{L\"{o}wner extension} {of} 
{the exponentiation}
function $\exp: \R \to \R$, which {gives a} function
that {maps elements of the EJA with spectrum in $\R$ to elements of the EJA as follows:}
 \[
    \mathbf{exp}:\quad \sum_{i=1}^r\lambda_i \mm{q}_i \mapsto  \sum_{i=1}^r \exp(\lambda_i)\mm{q}_i \ \in \cj.
\]
Let $(\cc{J}, \circ)$ be an EJA and $\mmx, \mmy \in \cc{J}$, the generalized Golden-Thompson inequality holds~\cite{Golden_Thompson_EJA}:
\beq\label{eq:golden_thompson}
\Tr(\mmexp(\mmx + \mmy)) \le \Tr(\mmexp(\mmx)\circ \mmexp(\mmy)).
\eeq

Moreover, for any element $\mmx\in\cc{J}$, let $\sum_i\lambda_i\mmq_i$ be its spectral decomposition. We define the infinity norm of $\mmx$ as the maximum magnitude of its eigenvalues, namely $\|\mmx\|_\infty \triangleq \max_{i\in [r]} |\lambda_i|$.

{
Additional details on symmetric cones and EJAs can be found in \cite{BOO:FK94}, \cite{PHD:V07}, and \cite{vanden}.}

\subsection{Symmetric cone programming}\label{sec:scp}
A Symmetric Cone Program (SCP) corresponds to linear optimization over the intersection of a symmetric cone with an affine subspace.  A pair of primal/dual SCPs over a symmetric cone $\cc{K}$ (in standard form) is given by:
\begin{equation}\label{scpbeforesplit}
\begin{aligned}
    \max \quad & \mmc \bullet \mmx\\
    \subto \quad & \mma_j \bullet \mmx = b_j \quad \forall j \in [m]\\
    & \mmx \succeq_\cc{K} 0
\end{aligned}
\hspace{4em}
\begin{aligned}
    \min \quad & \mmb^T \mmy\\
    \subto \quad & \sum_{j=1}^m \mma_j y_j \succeq_\cc{K} \mmc
\end{aligned}
\end{equation}

As a consequence of the classification of EJAs, symmetric cones provide a unified framework to study Linear Programs (LPs), Semidefinite Programs (SDPs), and Second-Order Cone Programs (SOCPs). Specifically, when $\mathcal{K} = \mathbb{R}^d_+$, SCPs correspond to LPs; when $\mathcal{K}$ is a Cartesian product of second-order cones, they correspond to SOCPs; and when $\mathcal{K}$ is the cone of Hermitian PSD matrices $\cc{S}^d_+$, they correspond to SDPs.

\subsection{{Symmetric cone multiplicative weights update (SCMWU)}} \label{sec:scmwu}

SCMWU is a recently introduced algorithm for {online linear optimization (OLO)} over the trace-one slice of a symmetric cone~\cite{scmwu}. It extends the seminal {MWU} method for {OLO} over the simplex~\cite{survey} and the {MMWU} method for OLO over the set of density matrices~\cite{kale,tsuda}.

We now describe   the online linear optimization framework where the SCMWU  finds its application.
At each decision epoch $t$,  a decision maker needs to choose a trace-one element $\xt{\mmp}{t}$ within a symmetric cone $\cc{K}$.   Once this choice is made, a linear loss function
{$\xt{\ell}{t}(\mmp) = \xt{\mmm}{t} \bullet \mmp$, where $\xt{\mmm}{t}$ is in the underlying EJA space of $\cc{K}$, is revealed.}
Employing SCMWU, the next iterate is determined~by
\begin{equation}\label{SCMWU}\tag{\sf SCMWU}
    \xt{\mmp}{t+1} = \frac{\mmexp(-\eta \sum_{\tau=1}^{t}\xt{\mm{m}}{\tau})}{\Tr(\mmexp(-\eta \sum_{\tau=1}^{t}\xt{\mm{m}}{\tau}))},
\end{equation}
and $\xt{\mmp}{1}$ is initialized as $\frac{\mme}{r}$, where $r$ represents the rank of the underlying EJA.%

In the asymptotic limit $t\to +\infty$, the cumulative losses incurred by SCMWU become comparable to the losses {of the best fixed action} that the decision maker {can make} 
{in}
{hindsight.}
This is commonly referred to as the ``no-regret'' property.
Here, we provide the regret bound for SCMWU presented in~\cite{scmwu} for the sake of comprehensiveness.

\begin{theorem}
\label{thm:scmwu}
Let $(\cc{J}, \circ)$ be an EJA of rank $r$, $\cc{K}$ be its cone of squares, and $\mmx\bullet\mmy = \Tr(\mmx \circ \mmy)$ be the EJA inner product. For any $\eta \in (0, 1]$ and any sequence of loss vectors $\xt{\mm{m}}{t}$ satisfy $\|\xt{\mmm}{t}\|_\infty \le 1$, the iterates $\xt{\mm{p}}{t}$ generated from~\eqref{SCMWU} satisfy
\[
\sum_{t=1}^T \xt{\mm{m}}{t}\bullet \xt{\mm{p}}{t} \leq \eigenval{\min}{\sum_{t=1}^T \xt{\mm{m}}{t}} + \eta T + \frac{\ln r}{\eta},
\]
where $\eigenval{\min}{\cdot}$ is the minimum eigenvalue of an EJA vector.
\end{theorem}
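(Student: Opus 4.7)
The argument follows the standard potential-function scheme from the matrix MWU analysis of Arora--Kale, adapted to the EJA setting. I would define the potential
\[
\Phi_t \triangleq \Tr(\mmexp(-\eta\, \xt{\mmM}{t})), \qquad \xt{\mmM}{t} \triangleq \sum_{\tau=1}^{t}\xt{\mmm}{\tau},
\]
with $\Phi_0 = \Tr(\mme) = r$. The plan is to derive a one-step multiplicative upper bound on $\Phi_t/\Phi_{t-1}$ in terms of the instantaneous loss $\xt{\mmm}{t}\bullet \xt{\mmp}{t}$, telescope across $t$, and then compare against a simple lower bound on $\Phi_T$ extracted from the minimum eigenvalue of $\xt{\mmM}{T}$.

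For the one-step bound I first apply the EJA Golden--Thompson inequality \eqref{eq:golden_thompson} to peel off the most recent loss, obtaining $\Phi_t \leq \Tr(\mmexp(-\eta\, \xt{\mmM}{t-1}) \circ \mmexp(-\eta\, \xt{\mmm}{t}))$. Next I would linearize the second factor via the scalar inequality $e^{-\eta y} \leq 1 - \eta y + \eta^2$, valid for $\eta \in (0,1]$ and $|y|\leq 1$ (a short elementary check by locating the extrema in $y$). Because $\|\xt{\mmm}{t}\|_\infty \leq 1$, applying this inequality eigenvalue-wise and reassembling along the shared Jordan frame of $\xt{\mmm}{t}$ lifts it to the L\"owner-type statement
\[
(1+\eta^2)\mme - \eta\, \xt{\mmm}{t} - \mmexp(-\eta\, \xt{\mmm}{t}) \in \cc{K}.
\]
Pairing this with $\mmexp(-\eta\, \xt{\mmM}{t-1}) \in \cc{K}$ and using the self-duality of $\cc{K}$ together with the definition $\xt{\mmp}{t} = \mmexp(-\eta\, \xt{\mmM}{t-1})/\Phi_{t-1}$, I obtain
\[
\Phi_t \;\leq\; \Phi_{t-1}\bigl(1 + \eta^2 - \eta\, \xt{\mmm}{t}\bullet \xt{\mmp}{t}\bigr) \;\leq\; \Phi_{t-1}\exp\!\bigl(\eta^2 - \eta\, \xt{\mmm}{t}\bullet \xt{\mmp}{t}\bigr),
\]
where the final step uses $1+x\leq e^x$.

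Telescoping from $t=1$ to $T$ yields $\Phi_T \leq r\cdot\exp\bigl(\eta^2 T - \eta \sum_{t=1}^{T} \xt{\mmm}{t}\bullet \xt{\mmp}{t}\bigr)$. For the matching lower bound, since every summand in the spectral decomposition of $\mmexp(-\eta\, \xt{\mmM}{T})$ is positive, $\Phi_T \geq \exp(-\eta\, \eigenval{\min}{\xt{\mmM}{T}})$. Combining these two bounds, taking logarithms, and dividing by $\eta$ produces the claimed regret inequality after rearrangement. The main step requiring care is the passage from the scalar inequality to its L\"owner counterpart on $\cc{J}$; this is essentially immediate once one observes that $\mmexp(-\eta\, \xt{\mmm}{t})$ and the affine element $(1+\eta^2)\mme - \eta\, \xt{\mmm}{t}$ share the spectral idempotents of $\xt{\mmm}{t}$, so their difference decomposes as a nonnegative combination of those idempotents and therefore lies in $\cc{K}$.
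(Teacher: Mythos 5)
Your proof is correct and follows essentially the same potential-function argument the paper uses: Golden--Thompson to peel off the latest loss, a scalar inequality lifted to the L\"owner order and paired via self-duality to get a multiplicative one-step bound, then $1+x\le e^x$, telescoping, and a lower bound on the potential via $\eigenval{\min}{\cdot}$. The only cosmetic difference is that you fold the bound $(\xt{\mmm}{t})^2\bullet\xt{\mmp}{t}\le 1$ into the scalar inequality from the outset (using $e^{-\eta y}\le 1-\eta y+\eta^2$ rather than $e^{-\eta y}\le 1-\eta y+\eta^2 y^2$), whereas the paper keeps the quadratic term and applies that bound as a final step; the resulting regret bound is identical.
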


\begin{proof}
Let $\xt{\mm{w}}{t} = \mmexp(-\eta \sum_{\tau=1}^{t-1} \xt{\mm{m}}{\tau})$, $\xt{\mm{w}}{1} = \mm{e}$ and $\xt{\mm{p}}{t} = \frac{\xt{\mm{w}}{t}}{\Tr(\xt{\mm{w}}{t})}$. We can establish an upper bound on $\Tr(\xt{\mm{w}}{T+1})$ via the following steps:
\begin{equation}
\label{eq:potential_upper}
\begin{aligned}
   \Tr(\xt{\mm{w}}{T+1}) &= \Tr(\mmexp(-\eta \sum_{t=1}^{T} \xt{\mm{m}}{t}))\\
   &\overset{(a)}{\le} \Tr(\xt{\mm{w}}{T} \circ \mmexp(-\eta \xt{\mm{m}}{T}))\\
   &= \xt{\mm{w}}{T} \bullet \mmexp(-\eta \xt{\mm{m}}{T})\\
   &\overset{(b)}{\le} \xt{\mm{w}}{T} \bullet (\mm{e} - \eta \xt{\mm{m}}{T} + \eta^2 (\xt{\mm{m}}{T})^2 )\\
   &= \Tr(\xt{\mm{w}}{T}) - \eta \xt{\mm{m}}{T} \bullet \xt{\mm{w}}{T} + \eta^2 (\xt{\mm{m}}{T})^2 \bullet \xt{\mm{w}}{T}\\
   &= \Tr(\xt{\mm{w}}{T}) \cdot (1 - \eta \xt{\mm{m}}{T} \bullet \xt{\mm{p}}{T} + \eta^2 (\xt{\mm{m}}{T})^2 \bullet \xt{\mm{p}}{T})\\
   &\overset{(c)}{\le} \Tr(\xt{\mm{w}}{T}) \cdot \exp(- \eta \xt{\mm{m}}{T} \bullet \xt{\mm{p}}{T} + \eta^2 (\xt{\mm{m}}{T})^2 \bullet \xt{\mm{p}}{T})\\
   &\overset{(d)}{\le} r \cdot \exp(- \eta \sum_{t=1}^T \xt{\mm{m}}{t} \bullet \xt{\mm{p}}{t} + \eta^2 \sum_{t=1}^T (\xt{\mm{m}}{t})^2 \bullet \xt{\mm{p}}{t}).
\end{aligned}
\end{equation}
Here $(a)$ is due to the generalized Golden-Thompson inequality (see~\eqref{eq:golden_thompson}), $(b)$ is due to the self-duality of the symmetric cone and the fact that $e^x \le 1 + x + x^2 \ \forall x [-1, 1]$, $(c)$ is because $1 + x \le e^x$, and $(d)$ is by induction.
On the other hand, a lower bound on $\Tr(\xt{\mmw}{T+1})$ can also be established:
\begin{equation}
\label{eq:potential_lower}
\begin{aligned}
   &\Tr(\xt{\mm{w}}{T+1}) = \Tr(\mmexp(-\eta \sum_{t=1}^{T} \xt{\mm{m}}{t})) = \sum_{k=1}^{r} \eigenval{k}{\mmexp(-\eta \sum_{t=1}^{T} \xt{\mm{m}}{t})}\\
   &= \sum_{k=1}^{r} \exp(\eigenval{k}{-\eta\sum_{t=1}^{T} \xt{\mm{m}}{t}}) \ge \exp(\eigenval{\max}{-\eta\sum_{t=1}^{T} \xt{\mm{m}}{t}}) = \exp(-\eta\eigenval{\min}{\sum_{t=1}^{T} \xt{\mm{m}}{t}}),
\end{aligned}
\end{equation}
where $\eigenval{k}{\cdot}$ represents the $k$-th eigenvalue.
Combining (\ref{eq:potential_upper}) and (\ref{eq:potential_lower}) gives
\beq\label{eq:regret_ineq}
\begin{aligned}
\exp(-\eta\eigenval{\min}{\sum_{t=1}^{T} \xt{\mm{m}}{t}}) &\le r \cdot \exp(- \eta \sum_{t=1}^T \xt{\mm{m}}{t} \bullet \xt{\mm{p}}{t} + \eta^2 \sum_{t=1}^T (\xt{\mm{m}}{t})^2 \bullet \xt{\mm{p}}{t})\\
&\le r \cdot \exp(- \eta \sum_{t=1}^T \xt{\mm{m}}{t} \bullet \xt{\mm{p}}{t} + \eta^2 T).
\end{aligned}
\eeq
The last inequality is because $(\xt{\mmm}{t})^2 \bullet \xt{\mmp}{t} \le 1$, since $-\mm{e} \preceq_\cc{K} \xt{\mm{m}}{t} \preceq_\cc{K} \mm{e}$ and $\Tr(\xt{\mmp}{t}) = 1$.
Taking logarithms on both sides of~\eqref{eq:regret_ineq}, scaling by $\frac{1}{\eta}$, and rearranging gives the desired result.
\end{proof}

\subsection{Parallel computation}

The {\em work-span} (or work-depth) model is a framework for conceptualizing and describing parallel algorithms.
In this model, a computation can be viewed as a directed acyclic graph (DAG). The work $W$ of a parallel algorithm is the total number of operations, and the span $S$ is the length of the longest path in the DAG.
Given $P$ processors with shared memory, the computation can be executed in $W/P + O(S)$ time with high probability \cite{tschedule1,tschedule2}. Since $P$ is often asymptotically much smaller than $W$, the running time is often dominated by the total work of the algorithm. On the other hand, the span measures the ability of a parallel algorithm to effectively utilize more processors to improve the performance~\cite{BOO:BDS21}.

In this work, we will use the work-span model to analyze our algorithms under parallel settings.
A primitive problem that frequently arises as a subproblem when designing our parallel algorithms is the information aggregation problem. Given an array $(x_1, \dots, x_n)$ of length $n$ and an associative operator~$\diamond$, we want to compute $x_1 \diamond \dots \diamond x_n$. Let $\cc{I}$ be the set of odd indices in the array. For each $i\in\cc{I}$, we compute $y_{\lceil i/2\rceil} = x_i \diamond x_{i+1}$ (if $n$ is odd, we let $y_{\lceil n / 2 \rceil} = x_n$). Then we get a new array $y$ with length $\lceil n/2 \rceil$ and the problem becomes computing $y_1 \diamond \dots \diamond y_{\lceil n/2 \rceil}$. The problem can be solved recursively using the same approach until the length of the array is reduced to 1. The associated computational DAG of this process form a binary tree, where the total number of operations is $O(n)$ and the longest path in the DAG (the height of the binary tree) is $O(\log n)$.
Therefore, under the setting of the work-span model, the information aggregation problem can be solved in $O(n)$ work and $O(\log n)$ span. 

The information aggregation problem covers many basic operations such as summation and extremum, which are building blocks for our algorithms. Here we elucidate the process of parallelizing the SCMWU method within the work-span model, given its frequent status as the most computationally intensive component in our algorithms. Let $\cc{K}$ be the Cartesian product of several primitive symmetric cones and let $\cc{J}$ be the underlying EJA. The computation of the exponential of an element $\mmx\in \cc{J}$ can be decomposed into problems of computing the exponentials of each of its components. Let $N$ be the number of non-zero entries in $\mmexp(\mmx)$ and let $r$ be the rank of $\cc{J}$. Given the eigenvalues (or the trace) of each component of $\mmexp(\mmx)$, the computation of $\Tr(\mmexp(\mmx))$ is simply a summation problem that is solvable in $O(r)$ work and $O(\log r)$ span, and consequently the normalization of $\mmexp(\mmx)$ can be performed in $O(N)$ work and $O(1)$ span.
By the classification theorem of symmetric cones, any symmetric cone over the real field is isomorphic to the Cartesian product of PSD cones and second-order cones. Thus to analyze the computation of $\mmexp(\mmx)$, it only remains analyzing the exponentials of elements in the spaces of PSD cones and second-order cones.
Let $\cc{Q}^{d+1}$ be a second-order cone and let $(\mmu; u_0)$ be an element in the underlying space $\bR^{d+1}$. The spectral decomposition of $(\mmu; u_0)$ is given by $\lambda_{1,2} = {1\over \sqrt{2}}(u_0 \pm \|\mmu\|_2)$ and $\mmq_{1,2} = {1\over \sqrt{2}} (\pm\mmv; 1)$, where $\mmv = \frac{\mmu}{\|\mmu\|_2}$ or any unit vector if $\mmu = 0$. If $\mmu \neq 0$, To compute the eigenvalues and eigenvectors, the most intensive part is to compute $\|\mmu\|_2$, which is again a summation problem with some simple additional arithmetic and can be computed in $O(d)$ work and $O(\log d)$ span. Once the spectral decomposition is obtained, the exponential can be directly computed in $O(d)$ work and $O(1)$ span. 
In summary, an SCMWU step for $\cc{Q}^{d+1}$ can be performed with $O(d)$ work and $O(\log d)$ span.
In~\cite[Sec.~7]{AKcombinatorial}, Arora and Kale proposed an approximation algorithm for computing the exponential of matrices that lie in the space of the PSD cone. Their algorithm utilizes the power-series expansion of the exponential and the Johnson-Lindenstrauss lemma and only involves matrix multiplications, which can be reduced to a series of inner products and summations and can be directly parallelized within the work-span model. Given a $d\times d$ matrix, Arora and Kale's algorithm can be parallelized in ${\sf poly}(d)$ work and ${\sf polylog}(d)$ span.

\section{Primal-dual meta algorithm for SCPs}\label{sec:algorithm}

In this section, and  throughout this work, we consider a pair of primal/dual SCPs  of the~form
\begin{equation}
    \label{eq:klp}
    \tag{SCP}
    \begin{aligned}
        \max \quad   & \mm{c} \bullet \mm{x} + \mmc' \bullet \mmx'                                   \\
        \subto \quad & \mm{a}_j \bullet \mm{x} + \mma_j' \bullet \mmx' = b_j \quad   \forall j \in [m] \\
                     & \mm{x} \in \cc{K},\ \mmx' \in \cc{K}'
    \end{aligned}
    \hspace{3em}
    \begin{aligned}
        \min \quad   & \mm{b}^T \mm{y}                                           \\
        \subto \quad & \textstyle\sum_{j=1}^{m} \mm{a}_j y_j \succeq_\cc{K} \mmc \\
                    & \textstyle\sum_{j=1}^{m} \mm{a}'_j y_j \succeq_{\cc{K}'} \mmc' 
    \end{aligned}
\end{equation}
where $\cc{K}, \cc{K}'$ are symmetric cones,  $\mmc,\mma_j \in \cc{J}$, $\mmc', \mma_j' \in \cc{J}'$,  and $\cc{J}, \cc{J}'$ are the EJAs 
{associated with}
the cones $\cc{K}, \cc{K}'$ respectively. We assume throughout that~\eqref{eq:klp} exhibit strong duality {(e.g. see~\cite{lcp_duality})}, and use $\OPT$ to denote the shared optimal value of the primal and dual problem. 

Notice that the dual feasible region is defined by two generalized inequality constraints,   
\begin{equation}\label{candf}
\mathcal{F}\ {\triangleq}\ \{\mathbf{y} \in \mathbb{R}^m: \sum_{j=1}^{m} \mathbf{a}_j y_j \succeq_{\mathcal{K}} \mathbf{c}\} \ \text{ and } \ \mathcal{C}\ {\triangleq}\ \{\mathbf{y} \in \mathbb{R}^m: \sum_{j=1}^{m} \mathbf{a}'_j y_j \succeq_{\mathcal{K}'} \mathbf{c}'\}, 
\end{equation} 
where $\mathcal{F}$ is a constraint set that is ``hard to deal with'' while $\mathcal{C}$ is a  constraint set that is ``easy to satisfy''.    In our algorithm, the hard constraint set  $\mathcal{F}$ is  relaxed to a halfspace while for the easy  part  we will need to  to perform linear optimization over the intersection of the set $\mathcal{C}$ and a given~halfspace.

As a concrete  example, consider the case where $\cc{K} = \bR^{n}_+$, $\cc{K}' = \R^m_{+}$, $\mm{a}'_j \in \R^m$ is the $j$-th unit vector, and $\mm{c}' = \mm{0}$. Thehe pair of primal/dual SCPs is then just  a pair of primal/dual LPs:
\[ 
\max \{ \mm{c}^T \mm{x} : \mmA\mmx\le \mmb, \  \mm{x} \ge 0\} \text{  and  } 
\min \{ \mm{b}^T \mm{y} : \mmA^T \mmy - \mmc\ge 0, \ \mmy \ge 0\}.
\]
Here, the dual feasible region is the intersection of a polyhedron $\mathcal{F}=\{ \mmy:     \mmA^T \mmy - \mmc\ge 0\}$ corresponding to the hard set and the non-negative orthant $\mathcal{C}$, which corresponds to the easy set.   Note that  optimizing a linear function over the intersection of the non-negative orthant and a halfspace can be accomplished easily. %

As a second example, we consider the case where $\cc{K} = \cc{S}^{n}_+$, $\cc{K}' = \R^m_{+}$, $\mm{a}'_j \in \R^m$ is the $j$-th unit vector, and $\mm{c}' = \mm{0}$. In this setting, the pair of primal/dual  SCPs defined above  correspond to a pair of primal/dual SDPs:
\[
    \max \{  \mm{c} \bullet \mm{x} : \mm{a}_j \bullet \mm{x} \le b_j\ \forall j\in [m], \ \mm{x} \succeq  0\} \text{ and } 
    \min \{     \mm{b}^T \mm{y}: \textstyle\sum_{j=1}^{m} \mm{a}_j y_j \succeq   \mm{c}, \   \mmy \ge 0\}.
\] 
In this  setting  the hard part of the dual feasible region is carved out by a linear matrix inequality, i.e.\ $\mathcal{F}=\{\mmy: \sum_j\mm{a}_j y_j \succeq   \mm{c}\}$,  while the easy part is again the non-negative orthant. 
 This is exactly  the setting that Arora and Kale consider in their primal-dual framework for SDPs~\cite{AKcombinatorial}.

Finally, our analysis necessitates that the  SCPs adhere to the following assumption:
\[
\exists j\in [m] \text{ such that }  \mm{a}_j=\mm{e}  \text{ and } \mm{a}'_j\in \cc{K}'.
\]
This property allows us to ``project'' an approximately feasible dual solution to a strict one.
For simplicity, we assume that this assumption is true for the last primal constraint, i.e. 
\begin{equation}\label{assumption} 
\mm{a}_m=\mm{e} \  \text{ and } \ \mm{a}'_m\in \cc{K}'.
\end{equation} 
As a canonical example, this assumption holds when we have a bound $R$ on the trace of primal solutions. Indeed,  this case is captured by setting   $\mma_m = \mme$, $\mma'_m \in \cc{K}'$, and $b_m = R$.

In order to solve the~\ref{eq:klp} problem we reduce it to a feasibility problem, i.e.\ we   
{consider} {the following} {problem:} {can} 
{the dual problem in~\eqref{eq:klp} achieve a specified objective value $\alpha$? }
More specifically, does $(1 + \eps)\alpha \ge \OPT$ or $\alpha < \OPT$?
{
In addition to the answer, for the former case we also want to find a dual {feasible} solution to~\eqref{eq:klp} with objective value at most $(1+\eps)\alpha$, and for the latter case a primal {feasible} solution with objective value at least $\alpha$. 
In the following, the decision problem, together with the problem of finding the primal/dual solutions, is referred to as the {\em $\alpha$-feasibility test problem}, $\alpha$-FTP.
Given an algorithm that solves $\alpha$-FTP, the \ref{eq:klp} problem can then be solved by employing a binary search (or other searching methods) to narrow the range of $\alpha$. 
Therefore, in this section, we {discuss mainly} the algorithm for solving~$\alpha$-FTP.
}

 Define  $\cc{S}_\alpha = \{\mmy\in\bR^m : \mmb^T\mmy \le \alpha\}$ to be the $\alpha$-sublevel set of the dual objective function.  Recalling the definitions of  $ \mathcal{C}$  and $\mathcal{F}$ (see \eqref{candf})  we have that 
 \[ 
    \cc{F}\cap\cc{C}\cap \cc{S}_\alpha \neq \varnothing \iff \OPT\le \alpha.
\]
Indeed, if   $\alpha\ge \OPT$,  the dual optimal solution   lies in $  \cc{F}\cap\cc{C}\cap \cc{S}_\alpha$, so it is nonempty.
Conversely, {if} $\alpha < \OPT$,  all   feasible solutions have  value $>\alpha$, so  we have  $\cc{F}\cap\cc{C}\cap \cc{S}_\alpha = \varnothing$.
{Therefore, $\alpha$-FTP} is equivalent to determine whether the intersection between $\cc{F}$ and $\cc{C}\cap \cc{S}_\alpha$ is empty or not. 
Since $\cc{F}$, $\cc{C}$ and  $\cc{S}_\alpha$ are convex sets, $\cc{F}\cap\cc{C}\cap \cc{S}_\alpha = \varnothing$ is equivalent to saying that the hard constraint set $\mathcal{F}$ can be separated from $\cc{C}\cap \cc{S}_\alpha$, which can be certified by a separating hyperplane.  Our meta-algorithm is designed to discover a separating hyperplane, thereby certifying that $\alpha < \OPT$.  To this end, we relax the hard constraint set $\cc{F}$ to a halfspace $\mathcal{H}_+$ and test whether the intersection $\mathcal{H}_+ \cap \mathcal{C} \cap \mathcal{S}_\alpha$ is empty. If this intersection is empty, then $\mathcal{H}$ serves as a certificate that $\alpha < \OPT$. As long as the intersection is non-empty, we adjst the halfspace to ensure it contains the hard constraint set $\mathcal{F}$ while also endeavoring to separate it from the average of all previous iterates.  If we succeed to do this for enough  times,  we will  find  an approximate feasible  solution.
The high-level outline of the algorithm is presented in the boxed environment below.

\bigskip
\begin{nolinenumbers}
{
\begin{center}
\fbox{
    \begin{minipage}{.95\linewidth}
        {\bf Primal-dual meta-algorithm for {$\alpha$-FTP}}

        \medskip
        Let $\xt{\cc{H}}{1}$ be a hyperplane such that $\cc{F}$ is contained  in the   closed positive halfspace,~$\xt{\cc{H}}{1}_+$.

        \medskip
        For each iteration $t$, we test whether the intersection of $\xt{\cc{H}}{t}_+$ with $\cc{C} \cap \cc{S}_\alpha$ is empty:
        \begin{itemize}
            \item If yes, conclude that $\alpha < \OPT${,} {find} a primal feasible solution to~\eqref{eq:klp} with objective value {at least} $\alpha$, {and terminate.}
            
            \item If no, find a point $\xt{\mmy}{t} \in \xt{\cc{H}}{t}_+ \cap \cc{C} \cap \cc{S}_\alpha$. Let $\xt{\bar{\mmy}}{t} = \frac{1}{t}\sum_{\tau=1}^t \xt{\mmy}{\tau}$, which is the mean of the past iterates.
                  Adjust $\xt{\cc{H}}{t}$ to get a new hyperplane $\xt{\cc{H}}{t+1}$ (maintaining $\cc{F}\subseteq \xt{\cc{H}}{t+1}_+$), with the goal of   separating  $\xt{\bar{\mmy}}{t}$ from~$\cc{F}$.
                  
        \end{itemize}    %
        \medskip
        {If the algorithm does not terminate within a sufficiently large number of iterations, output a dual feasible solution to~\eqref{eq:klp} with objective value at most $(1 + \eps)\alpha$. }
    \end{minipage}
}
\end{center}
}
\end{nolinenumbers}
\bigskip

\begin{figure}[tb]
    \centering
    \includegraphics[width=\linewidth]{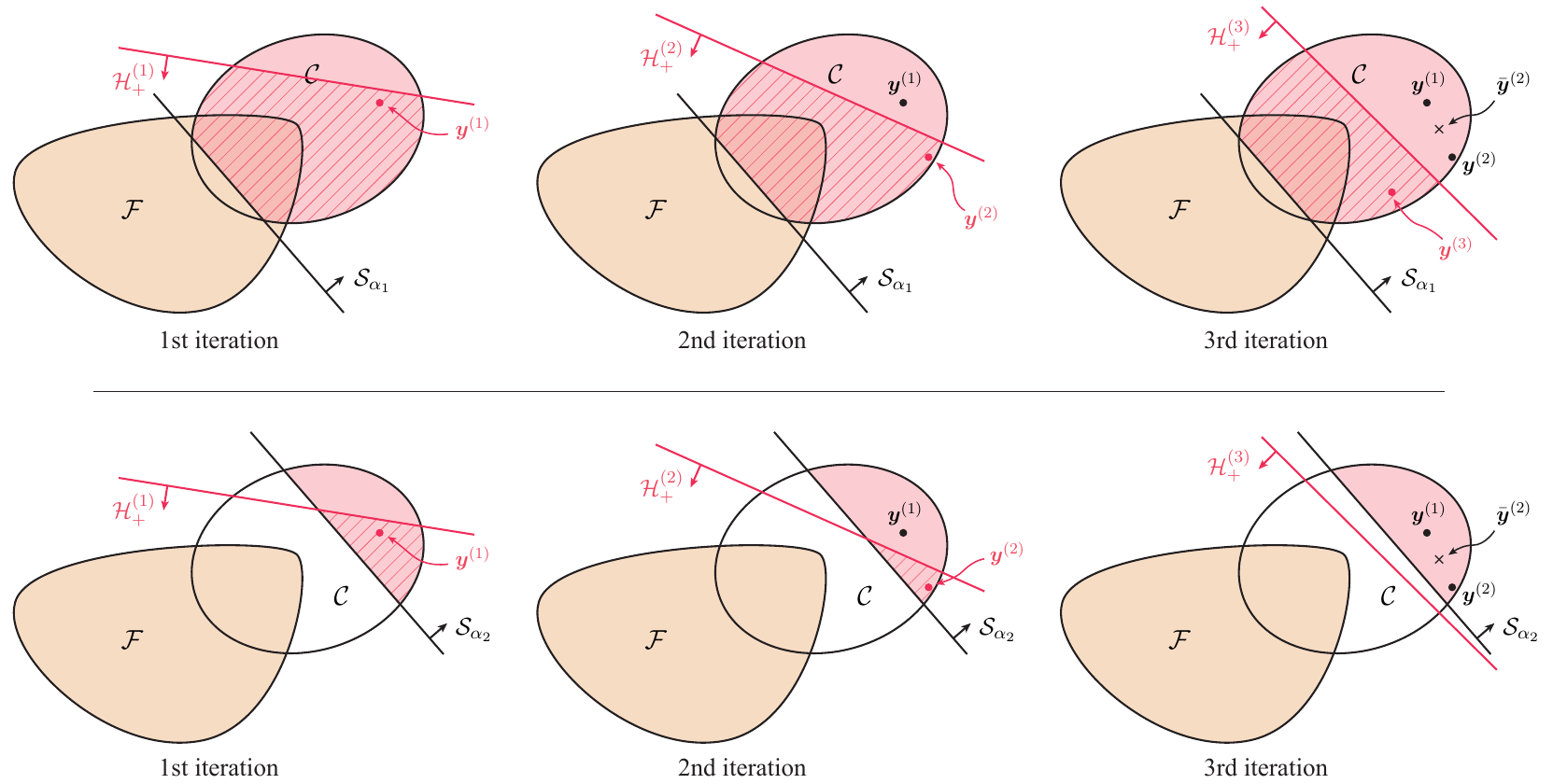}
    \caption{Illustration of the process of the meta-algorithm, where $\alpha_1 > \OPT > \alpha_2$. 
    }
    \label{fig:meta_illustration}
\end{figure}

Figure~\ref{fig:meta_illustration} illustrates three  iterations  (in two settings) 
of  our meta-algorithm  in
both non-separable and separable cases.
{In}
the former case (the top row), where %
{$\cc{F}\cap\cc{C}\cap\cc{S}_{\alpha_1} \not= \varnothing$} 
and $\alpha_1> \OPT$, our algorithm would not find a separating hyperplane{, and would thus} 
continue till $t$ reaches some large number and
{then output a dual feasible solution.}
In the latter case, indicated in the bottom row, the algorithm identifies a separating hyperplane, concluding  that $\alpha_2 < \OPT$. %

We designate this approach as a meta-algorithm because it contains several degrees of freedom that must be specified to develop a concrete algorithm for the \(\alpha\)-feasibility test problem. We will now discuss how each of the steps within the meta-algorithmic framework can be implemented.

\medskip\noindent{\bf Parameterizing the hyperplanes $\xt{\cc{H}}{t}$.}  
For each iteration $t$, we need to find a  halfspace $\xt{\cc{H}}{t}_+$ satisfying $\cc{F} \subseteq \xt{\cc{H}}{t}_+$. 
Recall that $\cc{F}$ is defined by the generalized inequality $\sum_j\mma_j y_j - \mmc \succeq_\cc{K} \mmzero$. A generalized inequality is equivalent to an infinite number of halfspace constraints, i.e.
\[
    \sum_j\mma_j y_j - \mmc \succeq_\cc{K} \mmzero \iff  (\sum_j \mma_j y_j - \mmc) \bullet \mmp \ge 0, \ \forall  \mmp\in  \cc{K} \text{ with }  \Tr(\mmp)=1,
\]
by the self-duality and homogeneous properties of symmetric cones.
Thus, it is a natural choice to consider hyperplanes $\xt{\cc{H}}{t}$ that are parametrized  by  trace-one vectors $\xt{\mmp}{t} \in \cc{K}$. That is, we define
\[
    \xt{\cc{H}}{t}=\{\mmy\in\bR^m : (\textstyle\sum_j \mma_j y_j - \mmc)\bullet \xt{\mmp}{t} = 0\},
\]
and $\cc{F}$ indeed lies in the closed positive halfspace  $\xt{\cc{H}}{t}_+=\{\mmy\in\bR^m : (\sum_j \mma_j y_j - \mmc)\bullet \xt{\mmp}{t} \ge  0\}$. 

\medskip\noindent{\bf Testing of separation.}  Given a halfspace $\xt{\cc{H}}{t}_+$ satisfying $\cc{F} \subseteq \xt{\cc{H}}{t}_+$,   we try to determine whether the intersection {of} $\xt{\cc{H}}{t}_+$ and $\cc{C} \cap \cc{S}_\alpha$ is empty. 
If yes, then $\cc{F}$, which is contained in $\xt{\cc{H}}{t}_+$, also has no intersection with $ \cc{C} \cap \cc{S}_\alpha$ {and we can conclude that $\alpha < \OPT$.}
On the other hand, if  $\xt{\cc{H}}{t}_+\cap \cc{C} \cap \cc{S}_\alpha\ne \varnothing$, we also want to output a {point $\xt{\mmy}{t}$} inside the region as a witness to this fact. 
As this decision problem is the core of the meta-algorithm, we abstract it as an {\sf ORACLE}: 

\begin{nolinenumbers}
{
\begin{center}
\fbox{
\begin{minipage}{.85\linewidth}
    Given $\xt{\mmp}{t}\in \cc{K}$ with $\Tr(\xt{\mmp}{t}) = 1$, determine if there exists $\mmy \in \bR^m$ such that
    \[
        (\textstyle\sum_j \mma_j y_j - \mmc) \bullet \xt{\mmp}{t} \ge 0,\ \mmb^T \mmy \le \alpha \text{ and } \mmy \in \cc{C}.
    \]
\end{minipage}
}
\end{center}
}
\end{nolinenumbers}

The  {\sf ORACLE} can be implemented in a variety of ways.  One possible  implementation of    the  {\sf ORACLE}  is to  solve the following convex problem (with $\xt{\mmp}{t}$ as the input at iteration $t$):
\begin{equation}\label{eq:oracle_program}
    \begin{aligned}
        \max_\mmy \quad & \textstyle\sum_j (\mma_j \bullet \xt{\mmp}{t}) y_j - \mmc \bullet \xt{\mmp}{t} \\
        \subto \quad    & \mmb^T \mmy \le \alpha, \quad \mmy \in \cc{C}.
    \end{aligned}
\end{equation}
If the optimal value is negative, this means we have $(\sum_j \mma_j {y}_j - \mmc) \bullet \xt{\mmp}{t} < 0$ {for all the points $\mmy \in \cc{C}\cap\cc{S}_\alpha$, in which case the entire region of $\cc{C}\cap\cc{S}_\alpha$ is in the open negative halfspace $\xt{\cc{H}}{t}_{-}$ and separated from $\cc{F}$ in $\xt{\cc{H}}{t}_{+}$.}
Thus, the {\sf ORACLE} can output {affirmatively} that 
$\xt{\cc{H}}{t}_{+} \cap \cc{C} \cap \cc{S}_\alpha$ is empty.       
On the other hand, if    the 
optimal value of \eqref{eq:oracle_program} is {non-negative} and  $\xt{\mmy}{t}$ is an optimal solution, we clearly have that $\xt{\mmy}{t} \in \xt{\cc{H}}{t}_{+} \cap \cc{C} \cap \cc{S}_\alpha$.

The complexity of each iteration of our algorithm depends on the complexity of implementing the {\sf ORACLE}, which in turn depends on the  ``complexity'' of the set $\mathcal{C}$. 
Additionally, let $\cc{D} = \{\mmp\in \cc{K} : \Tr(\mmp) = 1\}$ be the set of all possible inputs to the {\sf ORACLE}. We assume that for any input $\mmp\in\cc{D}$, the output $\mmy$ of the {\sf ORACLE} satisfies $\|\sum_j\mma_j y_j - \mmc\|_\infty \le \rho$. The quantity $\rho$ is designated as the {\em width} of the {\sf ORACLE}. The number of iterations of our algorithm will depend on the width of the {\sf ORACLE} as will be revealed in what follows.
These two considerations on the complexity of implementing the {\sf ORACLE} and its width  offer insights into the properties that the constraint set \(\mathcal{C}\) should possess to be considered easy. 

\medskip\noindent{\bf Finding a primal solution.} 
{Consider the pair of primal/dual optimization problems:}
\begin{equation}\label{eq:primal_construction}
    \begin{aligned}
        \min_\mmy \quad & \mmb^T \mmy                                                               \\
        \subto \quad    & \textstyle\sum_j (\mma_j \bullet \xt{\mmp}{t}) y_j \ge \mmc \bullet \xt{\mmp}{t} \\
                        & \mmy \in \cc{C}
    \end{aligned}
    \qquad
    \begin{aligned}
        \max_{z, \mmx'} \quad & (\mmc\bullet\xt{\mmp}{t})z + \mmc'\bullet\mmx'                                       \\
        \subto \quad          & (\mma_j \bullet \xt{\mmp}{t})z + \mma_j' \bullet \mmx' = b_j \quad \forall j \in [m] \\
                              & z\ge 0,\ \mmx'\in \cc{K}'
    \end{aligned}
\end{equation}
denoted  {\sf P} and {\sf D},  respectively.
{Clearly, if the optimal value of~\eqref{eq:oracle_program} is negative (i.e.\ when the {\sf ORACLE} outputs affirmatively), 
the value of {\sf P}  
is greater than $\alpha$. Then, by strong duality, the value of {\sf D} is also greater than $\alpha$.  Let $(\tilde{z}; \tilde{\mmx}')$ be the  optimal solution of~{\sf D}.
Since $\tilde{z}\ge 0$ and $\xt{\mmp}{t}\in \cc{K}$,} 
{$(\tilde{z}\xt{\mmp}{t}; \tilde{\mmx}')$ is} 
{a primal feasible solution to~\eqref{eq:klp} with  value at least $\alpha$.}

\medskip\noindent{\bf Adjusting the hyperplane $\xt{\cc{H}}{t}$.} 
Consider the case when  $\xt{\cc{H}}{t}_+$ intersects $\cc{C}\cap \cc{S}_\alpha$  and the {\sf ORACLE} returns  $\xt{\mmy}{t}\in\cc{C}\cap\cc{S}_\alpha$. Then,  we  need to adjust $\xt{\cc{H}}{t}$ to a new hyperplane  $\xt{\cc{H}}{t+1}$. {This is done by using the symmetric cone multiplicative weights update (SCMWU) method introduced in Section~\ref{sec:scmwu} to update the vector $\xt{\mmp}{t}\in\cc{K}$ that parameterizes $\xt{\cc{H}}{t}$.
Specifically, we define the loss vectors $\xt{\mmm}{\tau} = \frac{1}{\rho}(\sum_j \mma_j \xt{y}{\tau}_j - \mmc)$ for $\tau = 1,\cdots,t$, where $\xt{\mmy}{\tau}$ is the output of the {\sf ORACLE} at the $\tau$-th iteration and $\rho$ is the width of the {\sf ORACLE}. The next vector $\xt{\mmp}{t+1}$ is then obtained from~\eqref{SCMWU}. Note that since $\sum_{\tau}^t \xt{\mmm}{\tau}$ equals $\frac{t}{\rho}(\sum_j\mma_j \xt{\bar{y}}{t}_j - \mmc)$, where $\xt{\bar{\mmy}}{t}$ is the mean of the past points, the next vector $\xt{\mmp}{t+1}$ can be computed by: 
}
\begin{equation}\label{implement}
    \xt{\mmp}{t+1} = \frac{1}{C}\cdot \mmexp\Big({-\eta t \over \rho}(\sum_j \mma_j \xt{\bar{y}}{t}_j - \mmc)\Big),
\end{equation}
where $C$  normalizes $\xt{\mmp}{t+1} $  to have trace one.

We have two notes to make here. First, for any $\mmy\in\cc{F}$, we have $(\sum_j \mma_j \mmy - \mmc) \bullet \xt{\mmp}{t+1} \ge 0$ and thus $\cc{F}$ is still in $\xt{\cc{H}}{t+1}_+$. Second, here is the intuition on  
how the new hyperplane $\xt{\cc{H}}{t+1}$ endeavors to separate $\xt{\bar{\mmy}}{t}$ and $\cc{F}$:
{since the vectors}
$\sum_j \mma_j \xt{\bar{y}}{t}_j - \mmc$ and
$\xt{\mmp}{t+1}$
commute,
let $\lambda_k$ be the eigenvalues of $\sum_j \mma_j \xt{\bar{y}}{t}_j - \mmc$, we have:
\[
    (\sum_j \mma_j  \xt{\bar{y}}{t}_j - \mmc) \bullet \xt{\mmp}{t+1} = \frac{1}{C}\sum_k\lambda_k\exp({-\eta t \lambda_k\over \rho}).
\]
Hence, {on the right-hand side above,} significantly negative eigenvalues are given large positive weights
{in an effort}
to make the above summation negative. From a geometric perspective, the adjustment endeavors to place $\xt{\bar{\mmy}}{t}$ in the open negative halfspace $\xt{\cc{H}}{t+1}_{-}\ \triangleq\ \{\mmy\in\bR^m : (\sum_j \mma_j y_j - \mmc)\bullet \xt{\mmp}{t+1} < 0\}$ so that it is separated from $\cc{F}$ in {$\xt{\cc{H}}{t+1}_+$}.

 Having outlined all the steps in the meta-algorithm, we are now ready to present our central theorem, specifying the required   number of steps  to reach a conclusion regarding whether {\sf OPT} surpasses or falls short of our current estimate $\alpha$.

\begin{theorem}\label{thm:meta}
{Consider a pair of primal/dual~\ref{eq:klp}s over the symmetric cone $\cc{K}\cproduct\cc{K}'$, where the rank of $\cc{K}$ is $r$, and assumption  \eqref{assumption} holds. 
Fix an error parameter $\eps>0$ and a guess $\alpha>0$ for the optimal value {\sf OPT}.
Execute the primal-dual meta-algorithm for $T = \left\lceil\frac{4 R^2\rho^2 \ln r}{\eps^2\alpha^2}\right\rceil$ iterations, adjusting the hyperplane using SCMWU (as defined in \eqref{implement}) with step size $\eta = \sqrt{\frac{\ln r}{T}}$ and  an {\sf ORACLE}  with width $\rho.$} Then,  we either correctly conclude that ${\sf OPT}>\alpha$,  or, if  
the algorithm does not terminate within $T$ iterations we conclude that ${\sf OPT}\le (1+\eps) \alpha$  and  the vector
\[
    \tilde{\mmy} =\left(\xt{\bar{y}}{T}_1, \ldots, \xt{\bar{y}}{T}_{m-1}, \xt{\bar{y}}{T}_m+ \frac{\eps\alpha}{R}\right)
\]
is a dual feasible solution to \ref{eq:klp} with objective value $\mmb^T\tilde{\mmy} \le (1 + \eps)\alpha$.
\end{theorem}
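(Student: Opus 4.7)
The proof naturally splits along the two cases in the theorem. In the easy case, if at some iteration $t \le T$ the {\sf ORACLE} reports that $\xt{\cc{H}}{t}_+ \cap \cc{C} \cap \cc{S}_\alpha = \varnothing$, then since $\cc{F} \subseteq \xt{\cc{H}}{t}_+$ we also have $\cc{F} \cap \cc{C} \cap \cc{S}_\alpha = \varnothing$, and by the earlier discussion $\OPT > \alpha$; the primal feasible witness is produced from the dual {\sf D} of \eqref{eq:primal_construction} as described in the text. So I would dispose of this case in one short paragraph.

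The main work is the ``non-termination'' case. Here the plan is to feed the hypotheses of Theorem~\ref{thm:scmwu} directly with the loss vectors $\xt{\mmm}{t} = \tfrac{1}{\rho}(\sum_j \mma_j \xt{y}{t}_j - \mmc)$. The width bound $\rho$ guarantees $\|\xt{\mmm}{t}\|_\infty \le 1$, so the regret bound is applicable. Since the {\sf ORACLE} returned $\xt{\mmy}{t}$ satisfying $(\sum_j \mma_j \xt{y}{t}_j - \mmc) \bullet \xt{\mmp}{t} \ge 0$, we have $\xt{\mmm}{t} \bullet \xt{\mmp}{t} \ge 0$ for every $t$, so the left-hand side of Theorem~\ref{thm:scmwu} is nonnegative. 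Rearranging, and recalling that $\sum_{t=1}^T \xt{\mmm}{t} = \frac{T}{\rho}(\sum_j \mma_j \xt{\bar{y}}{T}_j - \mmc)$, this gives
\[
\eigenval{\min}{\textstyle\sum_j \mma_j \xt{\bar{y}}{T}_j - \mmc} \ge -\frac{\rho}{T}\Big(\eta T + \frac{\ln r}{\eta}\Big).
\]
Plugging in $\eta = \sqrt{\ln r / T}$ and $T = \lceil 4R^2\rho^2 \ln r/(\eps^2 \alpha^2)\rceil$ turns the right-hand side into exactly $-\eps\alpha/R$, so $\sum_j \mma_j \xt{\bar y}{T}_j - \mmc + \tfrac{\eps\alpha}{R}\mme \succeq_{\cc{K}} 0$.

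It now remains to turn this ``approximately feasible'' dual iterate into a strictly feasible one via assumption~\eqref{assumption}. Since $\mma_m = \mme$, shifting coordinate $m$ of $\xt{\bar{\mmy}}{T}$ by $\eps\alpha/R$ exactly absorbs the $\tfrac{\eps\alpha}{R}\mme$ slack, showing $\sum_j \mma_j \tilde y_j \succeq_{\cc{K}} \mmc$. For the second dual constraint, $\xt{\bar{\mmy}}{T} \in \cc{C}$ holds by convexity of $\cc{C}$ (each $\xt{\mmy}{t}$ lies in $\cc{C}$), and adding $\tfrac{\eps\alpha}{R}\mma'_m$ with $\mma'_m \in \cc{K}'$ preserves $\succeq_{\cc{K}'} \mmc'$ since $\cc{K}'$ is a convex cone, so $\tilde{\mmy} \in \cc{C}$ as well. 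Finally, $\mmb^T \xt{\bar{\mmy}}{T}\le \alpha$ since each iterate lives in $\cc{S}_\alpha$, and the canonical instantiation $b_m = R$ of \eqref{assumption} gives $\mmb^T\tilde{\mmy} \le \alpha + R\cdot \tfrac{\eps\alpha}{R} = (1+\eps)\alpha$, completing the bound.

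The only ``obstacle'' is really bookkeeping: verifying that the choices of $T$ and $\eta$ make the right-hand side of the regret bound collapse to precisely $\eps\alpha/R$, and checking carefully that assumption~\eqref{assumption} is strong enough to simultaneously fix the $\cc{F}$-constraint (via $\mma_m = \mme$), preserve the $\cc{C}$-constraint (via $\mma'_m\in\cc{K}'$), and keep the objective cost of the shift to exactly $\eps\alpha$ (via $b_m = R$). Every other step is a direct invocation of Theorem~\ref{thm:scmwu} or of convexity.
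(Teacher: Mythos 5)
Your proposal is correct and follows essentially the same route as the paper's proof: apply the SCMWU regret bound to the loss vectors $\xt{\mmm}{t} = \tfrac{1}{\rho}(\sum_j \mma_j \xt{y}{t}_j - \mmc)$, use the non-negativity of the per-round loss, plug in $T$ and $\eta$ to bound the slack by $\eps\alpha/R$, and then use assumption \eqref{assumption} to absorb that slack into coordinate $m$ while preserving both dual constraints and the objective bound. The only nit is that because of the ceiling in $T$ the right-hand side is $\ge -\eps\alpha/R$ rather than ``exactly'' $-\eps\alpha/R$, but the inequality points the right way so the conclusion is unaffected.
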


\begin{proof}
    Suppose that the algorithm does not terminate after  $t = 1,\ldots, T$ iterations. By our choice of $\xt{\mmy}{t}$ and $\xt{\mmm}{t} = \frac{1}{\rho} (\sum_j \mma_j \xt{y}{t}_j - \mmc)$, the loss $\xt{\mmm}{t}\bullet \xt{\mmp}{t}$ for SCMWU is always non-negative. By the regret bound of SCMWU (Theorem~\ref{thm:scmwu}), we have:
    \[
        0 \leq \sum_{t=1}^T \frac{1}{\rho} (\sum_j \mma_j \xt{y}{t}_j - \mmc)\bullet\xt{\mm{p}}{t} \leq \eigenval{\min}{\sum_{t=1}^T \frac{1}{\rho} (\sum_j \mma_j \xt{y}{t}_j - \mmc)} + \eta T + \frac{\ln r}{\eta}.
    \]
    Then, scaling both sides of the inequality by $\frac{\rho}{T}$, we have
    \begin{equation}\label{eq:meta_approx_fsb}
        0 \leq \eigenval{\min}{\sum_{j} \mm{a}_j \xt{\bar{y}}{T}_j - \mm{c}} + \eta \rho + \frac{\rho\ln r}{\eta T},
    \end{equation}
    and substituting $T = \left\lceil\frac{4R^2 \rho^2 \ln r}{\eps^2 \alpha^2} \right\rceil$ and $\eta = \sqrt{\frac{\ln r}{T}}$ into the above gives us
    \[
        \eigenval{\min}{\sum_j \mma_j \xt{\bar{y}}{T}_j - \mmc} + \frac{\eps \alpha}{R} \ge 0, \]
        or equivalently
        \begin{equation}\label{lambdamax}
        \sum_{j} \mm{a}_j \xt{\bar{y}}{T}_j \succeq_\cc{K} \mm{c} - \frac{\eps\alpha}{R} \mm{e}.
    \end{equation}
   We now show 
     that the vector $\tilde{\mmy} = (\xt{\bar{y}}{T}_1, \ldots, \xt{\bar{y}}{T}_{m-1}, \xt{\bar{y}}{T}_m+ \frac{\eps\alpha}{R})$ is dual feasible. First, to show that $ \tilde{\mmy}\in \mathcal{F}$, we use  \eqref{lambdamax} 
     and recall that by assumption   \eqref{assumption} we have that  $\mm{a}_m=\mm{e}$. Then, \eqref{lambdamax}~becomes:
    \[
        \sum_{j=1}^{m-1} \mm{a}_j \xt{\bar{y}}{T}_j + ( \xt{\bar{y}}{T}_m+\frac{\eps\alpha}{R}) \mm{e} = \sum_{j=1}^m\mm{a}_j \tilde{y}_j \succeq_\cc{K} \mm{c}. 
    \]
    Next,  we show that  $ \tilde{\mmy}\in \mathcal{C}$. For this, by definition of the {\sf ORACLE} we have that $\xt{{\mmy}}{t} \in \mathcal{C}$,  for all $t=1,\ldots, T$, and by averaging we get that   $\xt{\bar{\mmy}}{T}\in \mathcal{C}$, i.e.
    \begin{equation}\label{finaleq} 
    \sum_j \mma_j' \xt{\bar{y}}{T}_j \succeq_{\cc{K}'} \mmc'.
    \end{equation} 
    Then, by assumption   \eqref{assumption} we have that $ \mma_m'\in \cc{K}'$, and thus \eqref{finaleq} implies that 
    \[
         \sum_{j=1}^m \mma_j' \tilde{y}_j  = \sum_{j=1}^{m-1} \mma_j' \xt{\bar{y}}{T}_j+\mma_m' (\xt{\bar{y}}{T}_m+{\epsilon \alpha\over R}  )\succeq_{\cc{K}'}     \sum_{j=1}^{m-1} \mma_j' \xt{\bar{y}}{T}_j+\mma_m' \xt{\bar{y}}{T}_m  \succeq_{\cc{K}'} \mmc'.
    \]
    Finally, by assumption  \eqref{assumption} we have that  $b_m = R$. Thus,   the objective value of $\tilde{\mmy}$ satisfies:
    \[
        \mmb^T \tilde{\mmy} = \mmb^T \xt{\bar{\mmy}}{T} + R \frac{\eps\alpha}{R} \le \alpha + \eps\alpha = (1 + \eps)\alpha,
    \]
    which completes the proof.
\end{proof}

\medskip\noindent{\bf Remarks on $\cc{C}$.} 
The 
{iteration complexity} 
is quadratic in the width $\rho$ of the {\sf ORACLE} used. Thus, it is desirable to design an {\sf ORACLE} with a small width. As $\cc{C}$ is a constraint of the convex problem solved by the {\sf ORACLE}, $\rho$ in some way depends on the choice of $\cc{C}$. Therefore, to reduce $\rho$, we may shrink the space of $\cc{C}$ by adding extra constraints to the problem. These constraints 
{should}
be redundant to the original problem~\eqref{eq:klp}, but 
strong enough 
{to restrict}
the result of~\eqref{eq:oracle_program} 
so that
{\sf ORACLE} 
{has a} 
smaller $\rho$, as we will see in an example in Section~\ref{sec:pd}. Moreover, if it is not required to find a primal solution ({by solving} (\ref{eq:primal_construction})), the choice of $\cc{C}$ can be more aggressive:
we can {replace}
the set $\cc{C}$ {by} any 
convex set $\cc{C'}\subseteq \cc{C}$ that {contains the dual optimal solution $\mmy^*$,}
since whenever the {\sf ORACLE} {confirms} that $\xt{\cc{H}}{t}_{+} \cap \cc{C}'\cap \cc{S}_\alpha$ is empty we will have the correct outcome that 
$\OPT > \alpha$ (since $y^* \in \cc{F} \cap \cc{C}' \subseteq \xt{\cc{H}}{t}_{+} \cap \cc{C}'$) and 
$\cc{F} \cap \cc{C} \cap \cc{S}_\alpha = \varnothing$.

\medskip\noindent{\bf From feasibility to optimization.}
Assume an upper/lower bound for the value of $\OPT$. (Otherwise, we can employ an exponential search or a constant-approximation algorithm to obtain one.) With our meta-algorithm, a $(1 + \eps)$-approximate solution to~\eqref{eq:klp} can then be computed using a binary search. However, using the desired accuracy $\eps$ as the error tolerance in each binary search step results in an iteration complexity of $O(T\log\frac{1}{\eps})$. Instead of doing that, we can choose the error tolerance $\eps_i$ adaptively for each search step by setting $\eps_i\alpha_i$ to be within a constant fraction of the size of its searching range. The total number of iterations is then dominated by the last step (with the smallest $\eps_i\alpha_i = \Theta(\eps\OPT)$), and is improved to $O(T)$.
Assume we know that ${\sf OPT} > 0$ and lies in the range $[L_0, U_0]$.
In the first step, we run our meta-algorithm, with $\alpha_1 = L_0 + {1\over 3} (U_0 - L_0)$ and $\varepsilon_1={1\over 3\alpha_1}(U_0 - L_0)$.  According to Theorem~\ref{thm:meta}, we either correctly conclude that this  problem is infeasible, in which case ${\sf OPT}> \alpha_1$ and the  search range becomes:
\[
L_0 + {1\over 3}(U_0 - L_0) \le {\sf OPT} \le U_0,
\]
or we conclude that $ {\sf OPT}\le (1+\eps_1) \alpha_1 = L_0 + {2\over 3}(U_0 - L_0)$ (and find a solution with this value), and the search range becomes: 
\[
L_0 \le {\sf OPT} \le L_0 + {2\over 3}(U_0 - L_0).
\]
Setting $L_1 = L_0 + {1\over 3}(U_0 - L_0),\ U_1 = U_0$ if the first case materializes and $L_1 = L_0,\ U_1 = L_0 + {2\over 3}(U_0 - L_0)$ if the second case materializes. Both cases can be  summarized  as follows: 
\[
    L_1 \le {\sf OPT} \le U_1, \ \text{ where } (U_1 - L_1) = {2\over 3}(U_0 - L_0).
\]
Adopting this iterative refinement approach, at the $i$-th step we set $\alpha_i = L_{i-1} + {1\over 3}(U_{i-1} - L_{i-1})$ and $\eps_i = {1\over 3\alpha_i}(U_{i-1} - L_{i-1})$. For the resultant interval $[L_i, U_i]$ we have
\[
    (U_i - L_i) = \Big({2\over 3}\Big)^i (U_0 - L_0).
\]  
Thus, in $O\big(\log (\frac{U_0 - L_0}{\eps\OPT})\big)$ iterations the range size $(U_i - L_i)$ will be less than the desired (additive) error $\eps\OPT$. The very last dual solution will have value at most $(1+\eps){\sf OPT}$, and a scaled version of the $\xt{\mmp}{t}$ that parameterized the last separating hyperplane is a primal solution with value at least $(1-\eps)\OPT$. At each step of the binary search, we invoke Theorem~\ref{thm:meta} with $\eps_i\alpha_i={1\over 3} \left({2\over 3}\right)^{i-1}(U_0 - L_0)$ that decreases geometrically. The total number of iterations needed to solve all $\alpha_i$-feasibility test during the search is then upper bounded by the complexity needed for the very last feasibility test, which is $O\big(\frac{R^2 \rho^2\ln r}{\eps^2\OPT^2}\big)$.

\medskip\noindent{\bf Extension to minimization problems.}
With slight changes, our meta-algorithm also works for minimization problems.
Consider the following variant of~\eqref{eq:klp} where the primal problem is minimizing and the dual problem is maximizing:
\begin{equation}
    \label{eq:min_scp}
    \begin{aligned}
        \min \quad   & \mm{c} \bullet \mm{x} + \mmc' \bullet \mmx'                                   \\
        \subto \quad & \mm{a}_j \bullet \mm{x} + \mma_j' \bullet \mmx' = b_j \quad \forall j \in [m] \\
                     & \mm{x} \in \cc{K},\ \mmx' \in \cc{K}'
    \end{aligned}
    \hspace{3em}
    \begin{aligned}
        \max \quad   & \mm{b}^T \mm{y}                                           \\
        \subto \quad & \textstyle\sum_{j=1}^{m} \mm{a}_j y_j \preceq_\cc{K} \mmc \\
                    & \textstyle\sum_{j=1}^{m} \mm{a}'_j y_j \preceq_{\cc{K}'} \mmc' 
    \end{aligned}
\end{equation}
This time, given a specified objective value $\alpha$, the $\alpha$-feasibility test problem is to determine whether $(1-\eps)\alpha \le \OPT$ or $\alpha > \OPT$. 
For the former case, we provide a dual solution with objective value at least $(1-\eps)\alpha$, and for the latter case we want a primal solution with value at most $\alpha$.
Define the constraint sets
\begin{equation}\label{min_candf}
    \mathcal{F}\ {\triangleq}\ \{\mathbf{y} \in \mathbb{R}^m: \sum_{j=1}^{m} \mathbf{a}_j y_j \preceq_{\mathcal{K}} \mathbf{c}\} \ \text{ and } \ \mathcal{C}\ {\triangleq}\ \{\mathbf{y} \in \mathbb{R}^m: \sum_{j=1}^{m} \mathbf{a}'_j y_j \preceq_{\mathcal{K}'} \mathbf{c}'\},
\end{equation}
and the $\alpha$-superlevel set of the dual objective function $\cc{S}_\alpha = \{\mmy \in \R^m : \mmb^T \mmy \ge \alpha\}$. Similar to the maximization problems, we have
\[
    \cc{F}\cap \cc{C} \cap \cc{S}_\alpha \ne \varnothing \iff \OPT \ge \alpha.
\]
Thus $\alpha$-FTP can be solved by checking the emptiness of the intersection of $\cc{F}, \cc{C}$ and $\cc{S}_\alpha$.
Given a $\xt{\mmp}{t}\in \cc{K}$ with $\Tr(\xt{\mmp}{t}) = 1$, we parameterize the hyperplane $\xt{\cc{H}}{t}$ as
\[
    \xt{\cc{H}}{t} = \{ \mmy \in \R^m : (\mmc - \textstyle\sum_j\mma_j y_j) \bullet \xt{\mmp}{t} = 0\},
\] 
so that the region $\cc{F}$ is kept inside the closed positive halfspace $\xt{\cc{H}}{t}_+$. To test if the hyperplane $\xt{\cc{H}}{t}$ separates $\cc{F}$ from $\cc{C} \cap \cc{S}_\alpha$ (or equivalently whether $\xt{\cc{H}}{t}_+\cap \cc{C}\cap \cc{S}_\alpha = \varnothing$), the {\sf ORACLE} for minimization problems becomes:

\begin{nolinenumbers}
{
\begin{center}
\fbox{
\begin{minipage}{.85\linewidth}
    Given $\xt{\mmp}{t}\in \cc{K}$ with $\Tr(\xt{\mmp}{t}) = 1$, determine if there exists $\mmy \in \bR^m$ such that
    \[
        (\mmc - \textstyle\sum_j \mma_j y_j) \bullet \xt{\mmp}{t} \ge 0,\ \mmb^T \mmy \ge \alpha \text{ and } \mmy \in \cc{C}.
    \]
\end{minipage}
}
\end{center}
}
\end{nolinenumbers}

Similarly, the {\sf ORACLE} can be implemented by solving the following convex problem:
\beq\label{eq:min_oracle}
\begin{aligned}
    \max_\mmy \quad & \mmc \bullet \xt{\mmp}{t} - \textstyle\sum_j (\mma_j \bullet \xt{\mmp}{t}) y_j\\
    \subto \quad & \mmb^T \mmy \ge \alpha,\quad \mmy \in \cc{C}.
\end{aligned}
\eeq
If the optimal value of the above problem is non-negative, and $\xt{\mmy}{t}$ is the solution.
Let $\rho$ be the width of the above {\sf ORACLE} which is defined as an upperbound on $\|\mmc - \sum_j\mma_jy_j\|_\infty$.
We define the loss vector $\xt{\mmm}{t} = {1 \over \rho}(\mmc - \sum_j\mma_j \xt{\mmy}{t})$ and update $\xt{\mmp}{t}$ using SCMWU:
\begin{equation}\label{min_implement}
    \xt{\mmp}{t+1} = \frac{1}{C}\cdot \mmexp\Big({-\eta t \over \rho}(\mmc - \sum_j \mma_j \xt{\bar{y}}{t}_j)\Big),
\end{equation}
where $C$ normalizes $\xt{\mmp}{t+1}$ to be trace-one.
If the value of problem~\eqref{eq:min_oracle} is negative, we know that $\alpha > \OPT$ and a scaled version of $\xt{\mmp}{t}$ serves as a primal solution of~\eqref{eq:min_scp} with objective value at most $\alpha$.
The running time of the meta-algorithm for solving $\alpha$-FTP for the minimization problem is concluded as the following theorem.

\begin{theorem}\label{thm:min_meta}
{Consider problem~\eqref{eq:min_scp} over the symmetric cone $\cc{K}\cproduct\cc{K}'$, where the rank of $\cc{K}$ is $r$, and assumption~\eqref{assumption} holds. 
Fix an error parameter $\eps>0$ and a guess $\alpha>0$ for the optimal value {\sf OPT}.
Execute the primal-dual meta-algorithm for $T = \left\lceil\frac{4 R^2\rho^2 \ln r}{\eps^2\alpha^2}\right\rceil$ iterations, adjusting the hyperplane using SCMWU (as defined in \eqref{min_implement}) with step size $\eta = \sqrt{\frac{\ln r}{T}}$ and an {\sf ORACLE}  with width $\rho$}. Then, we either correctly conclude that ${\sf OPT}<\alpha$,  or, if  
the algorithm does not terminate within $T$ iterations, we conclude that ${\sf OPT}\ge (1-\eps) \alpha$  and  the vector
\[
    \tilde{\mmy} =\left(\xt{\bar{y}}{T}_1, \ldots, \xt{\bar{y}}{T}_{m-1}, \xt{\bar{y}}{T}_m - \frac{\eps\alpha}{R}\right)
\]
is a dual feasible solution of~\eqref{eq:min_scp} with objective value $\mmb^T\tilde{\mmy} \ge (1 - \eps)\alpha$.
\end{theorem}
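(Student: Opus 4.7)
The plan is to mirror the proof of Theorem~\ref{thm:meta}, tracking the sign flips introduced by minimization. First, I will assume the algorithm fails to terminate within $T$ iterations. At each iteration $t$, the {\sf ORACLE} returns $\xt{\mmy}{t}$ with $(\mmc - \sum_j \mma_j \xt{y}{t}_j) \bullet \xt{\mmp}{t} \ge 0$, so with the chosen loss $\xt{\mmm}{t} = \frac{1}{\rho}(\mmc - \sum_j \mma_j \xt{y}{t}_j)$ each term $\xt{\mmm}{t}\bullet \xt{\mmp}{t}$ is non-negative. By construction, the width bound $\|\mmc - \sum_j \mma_j \xt{y}{t}_j\|_\infty \le \rho$ ensures $\|\xt{\mmm}{t}\|_\infty \le 1$, so Theorem~\ref{thm:scmwu} applies.

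Next, I will invoke the SCMWU regret bound to obtain
\[
0 \le \sum_{t=1}^T \xt{\mmm}{t}\bullet \xt{\mmp}{t} \le \eigenval{\min}{\textstyle\sum_{t=1}^T \xt{\mmm}{t}} + \eta T + \frac{\ln r}{\eta}.
\]
Scaling by $\rho/T$, observing $\sum_t \xt{\mmm}{t} = \frac{T}{\rho}(\mmc - \sum_j \mma_j \xt{\bar y}{T}_j)$, and plugging in $T = \lceil 4R^2\rho^2 \ln r / (\eps^2\alpha^2)\rceil$ with $\eta = \sqrt{\ln r / T}$ yields
\[
\eigenval{\min}{\mmc - \textstyle\sum_j \mma_j \xt{\bar y}{T}_j} + \frac{\eps\alpha}{R} \ge 0, \quad \text{i.e.,}\quad \textstyle\sum_j \mma_j \xt{\bar y}{T}_j \preceq_\cc{K} \mmc + \frac{\eps\alpha}{R}\mme.
\]

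Then I will verify that the shifted vector $\tilde\mmy = (\xt{\bar y}{T}_1,\dots,\xt{\bar y}{T}_{m-1}, \xt{\bar y}{T}_m - \tfrac{\eps\alpha}{R})$ is dual feasible for~\eqref{eq:min_scp}. Using assumption~\eqref{assumption} ($\mma_m = \mme$, $\mma'_m \in \cc{K}'$, $b_m = R$), the displayed inequality above rearranges to $\sum_j \mma_j \tilde y_j \preceq_\cc{K} \mmc$, so $\tilde\mmy \in \cc{F}$. For the $\cc{C}$ constraint, since $\xt{\mmy}{t}\in \cc{C}$ for each $t$ and $\cc{C}$ is convex, $\xt{\bar\mmy}{T} \in \cc{C}$; subtracting $\frac{\eps\alpha}{R}\mma'_m$ with $\mma'_m \in \cc{K}'$ only strengthens the generalized inequality $\sum_j \mma'_j y_j \preceq_{\cc{K}'} \mmc'$, so $\tilde\mmy \in \cc{C}$.

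Finally, the objective value satisfies $\mmb^T \tilde\mmy = \mmb^T \xt{\bar\mmy}{T} - R\cdot\frac{\eps\alpha}{R} \ge \alpha - \eps\alpha = (1-\eps)\alpha$, since $\xt{\bar\mmy}{T}$ lies in the averaged superlevel set $\cc{S}_\alpha$. The only real adjustment from the proof of Theorem~\ref{thm:meta} is keeping track of the direction of inequalities and of the shift in the last coordinate; the potentially tricky step is ensuring the $\cc{C}$ constraint is preserved after the shift, which works precisely because the shift is in the direction $-\mma'_m$ and $\mma'_m \succeq_{\cc{K}'} 0$, making the perturbation favorable rather than adversarial.
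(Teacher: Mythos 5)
Your proof is correct and follows exactly the route one would expect: it mirrors the paper's proof of Theorem~\ref{thm:meta} (which the paper itself implicitly relies on, giving no separate proof for Theorem~\ref{thm:min_meta}), carefully flipping the inequalities for the minimization setting. All the key steps check out: the nonnegativity of the per-round losses from the {\sf ORACLE} guarantee, the application of the SCMWU regret bound giving $\sum_j \mma_j \xt{\bar y}{T}_j \preceq_\cc{K} \mmc + \frac{\eps\alpha}{R}\mme$, the shift by $-\frac{\eps\alpha}{R}$ in the $m$-th coordinate restoring exact $\cc{F}$-feasibility via $\mma_m = \mme$, the observation that the same shift only tightens the $\cc{C}$ inequality because $\mma'_m \in \cc{K}'$, and the objective bound via $b_m = R$ and $\mmb^T\xt{\bar\mmy}{T}\ge\alpha$.
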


\section{Applications}\label{sec:application}
We apply the meta-algorithm to derive novel algorithms 
{for}
two geometry applications: the smallest enclosing sphere problem (Section~\ref{sec:ses}) and the support vector machine problem (a.k.a.\ polytope distance, Section~\ref{sec:pd}). 
{By {\em meta-algorithm}, we refer to the meta-algorithm for $\alpha$-FTP that incorporates the necessary search of a good $\alpha$ to solve~\eqref{eq:klp}} or~\eqref{eq:min_scp}.
For each application, we also show the analysis of the algorithm in the parallel setting.

\subsection{Smallest enclosing sphere}\label{sec:ses}
In  the  Smallest Enclosing Sphere (SES) problem,  the goal is to find the sphere with the minimum radius that encloses all the spheres in the input set
 $\cc{I} = \{S(\mmv_1, \gamma_1), \dots, S(\mmv_n, \gamma_n)\}$, where $S(\mmv_i, \gamma_i)$ is a sphere centered at $\mmv_i$ with radius $\gamma_i$. 
SES is an important problem arising in many branches of computer science such as computer graphics, machine learning, and operations research. In the past few decades, numerous methods have been proposed for solving the SES problem.
These methods can be grouped under two classes: 
{exact and approximation methods.}
The exact methods (for example,~\cite{ses_exact:megiddo1983linear, ses_exact:megiddo1984linear,ses_exact:dyer1992class, ses_exact:welzl2005smallest, ses_exact:gartner1995subexponential, ses_exact:gartner2000efficient}) mainly focus on solving the SES problem (for a set of points) in some fixed (low) dimension where the running time is usually linear in the input size $n$. However, when extending these exact methods to higher dimensions, the dependence on dimension could be sub-exponential or worse. On the other hand,  
the approximation methods are considerably faster in general (high) dimensions, and can be readily adapted to diverse input types such as spheres and ellipsoids.
A notable set of approximation methods uses core-sets~\cite{ses_coreset:badoiu2003smaller,ses_socp:kumar2003comuting, ses_coreset:clarkson2010coresets,ses_coreset:yildirim2008two,ses_coreset:nielsen2009approximating}, to compute an $(1 + \eps)$-approximate solution using only a subset of the input points of size $O(\frac{1}{\eps})$ or $O(\frac{1}{\eps^2})$, and the running time is usually linear in the input size, i.e. $O(\frac{nd}{\eps})$ or $O(\frac{nd}{\eps^2})$, where $d$ is the dimensionality of the problem. 
Note that
{the}
SES problem can also be formulated as a SOCP. However, employing the interior point methods on SOCPs often requires solving a linear system in each iteration and results in a super-quadratic or quadratic dependence in the dimensionality: see, for example,~\cite{ses_socp:kumar2003comuting,ses_socp:zhou2005efficient}.

We now formulate the SES problem as an SCP and use our meta-algorithm to develop an algorithm with a nearly linear running time.  Clearly, SES  can be formulated  as the following optimization problem: %
\begin{equation}\label{eq:ses}\tag{SES}
\begin{aligned}
    \min_{\mmu, \gamma} \quad & \gamma\\
    \subto \quad & \|\mmu - \mmv_i\|_2 \le \gamma - \gamma_i \quad \forall i \in [n]
\end{aligned}
\end{equation}
Since the constraint $\|\mmu - \mmv_i\|_2 \le \gamma - \gamma_i$ can be interpreted as a generalized inequality constraint w.r.t.\ $\cc{Q}^{d+1}$,
the problem can be viewed as a SCP where the cone is the Cartesian product of $n$ second-order cones, $\cproduct_{i=1}^n \cc{Q}^{d+1}$. 
Let $(\mmu^*, \gamma^*)$ be the optimal solution to the above program, then the SES of $\cc{I}$ is centered at $\mmu^*$ and the optimal radius is $\gamma^*$. 

In the algorithm, we use the constraint $\|\mmu-\mmv_1\|_2 \le \gamma - \gamma_1$ to define the set $\cc{C}$ and the remaining constraints to define the set $\cc{F}$. 
Notice that the vector $\xt{\mmp}{t}$ that parameterizes the hyperplane $\xt{\cc{H}}{t}$ lies in the product cone $\cproduct_{i=2}^{n} \cc{Q}^{d-1}$.
The running time of our algorithm is stated below.

\begin{theorem}\label{thm:ses}
Applying the meta-algorithm to problem~\eqref{eq:ses} for $n$ spheres in a $d$-dimensional space, we can compute an enclosing sphere with radius at most $(1 + \eps)\gamma^*$ in $O\left(\frac{nd \log n}{\eps^2}\right)$ time.
\end{theorem}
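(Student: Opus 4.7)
The plan is to formulate~\eqref{eq:ses} in the standard form of~\eqref{eq:min_scp} and invoke Theorem~\ref{thm:min_meta}. The natural dual variable is $\mmy = (\mmu; \gamma) \in \R^{d+1}$, and the $n$ sphere constraints supply the conic inequalities. Following the prescription given in the text preceding the theorem, the constraint for the first sphere goes into the easy set $\cc{C}$ while the remaining $n-1$ go into the hard set $\cc{F}$, so that $\mmp$ lives in $\cproduct_{i=2}^n \cc{Q}^{d+1}$, which has rank $r = 2(n-1)$ and thus $\ln r = O(\log n)$. Assumption~\eqref{assumption} can be enforced, if necessary, by augmenting the dual with a redundant upper bound on $\gamma$.

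Next I would bound the per-iteration work. The ORACLE of~\eqref{eq:min_oracle} has objective coefficients $\mma_j \bullet \xt{\mmp}{t}$ that can be assembled in $O(nd)$ work by aggregating over the $n - 1$ second-order-cone components of $\mmp$; the resulting small convex program in $d+1$ variables, with a single SOC constraint (from $\cc{C}$) and one scalar linear constraint, is then solvable in $O(d)$ time, essentially in closed form. The SCMWU update~\eqref{min_implement} evaluates $\mmexp$ independently on each of the $n-1$ cone components, and by the spectral decomposition of $\cc{Q}^{d+1}$ recalled in Section~\ref{sec:scmwu}, each such exponential is computable in $O(d)$ work. The total cost per iteration is therefore $O(nd)$.

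To control the iteration count I would bound the width $\rho$ of the ORACLE and the trace bound $R$ in assumption~\eqref{assumption}. A geometric argument using the SOC structure of $\cc{C}$, the objective bound $\mmb^T \mmy \ge \alpha$, and the trace-one constraint on $\mmp$ should show that every feasible output of the ORACLE satisfies $\|\sum_j \mma_j y_j - \mmc\|_\infty = O(\alpha)$, so that $\rho = O(\alpha)$, and likewise $R$ may be taken to be $O(\alpha)$. Substituting these into $T = \lceil 4 R^2 \rho^2 \ln r / (\epsilon^2 \alpha^2) \rceil$ from Theorem~\ref{thm:min_meta} yields $T = O(\log n / \epsilon^2)$. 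Multiplying by the per-iteration work $O(nd)$ and absorbing the constant-factor overhead of the search over $\alpha$ described in Section~\ref{sec:algorithm} gives the claimed $O(nd \log n / \epsilon^2)$ total runtime.

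The main obstacle I anticipate is the uniform width bound $\rho = O(\alpha)$: unlike in an interior-point method, where width plays no role, here one must ensure that for every trace-one $\mmp$ the ORACLE returns a point with residual $\sum_j \mma_j y_j - \mmc$ of controlled magnitude. The SOC form of $\cc{C}$, rather than the non-negative orthant that appeared in the LP/SDP instantiations of~\cite{pst:plotkin1995fast,AKcombinatorial}, makes this bound less immediate, and achieving it may require further shrinking $\cc{C}$ via redundant constraints, as foreshadowed in the ``Remarks on $\cc{C}$'' discussion.
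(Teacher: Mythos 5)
Your decomposition, per-iteration cost analysis, and overall plan match the paper's proof closely, but the iteration count does not go through as stated. You assert $R = O(\alpha)$ and $\rho = O(\alpha)$ and then claim that plugging into $T = \lceil 4R^2\rho^2\ln r / (\eps^2\alpha^2)\rceil$ gives $T = O(\log n/\eps^2)$. That substitution in fact gives $R^2\rho^2/\alpha^2 = O(\alpha^4/\alpha^2) = O(\alpha^2)$, i.e.\ $T = O(\alpha^2\log n/\eps^2)$, which is off by an instance-dependent factor. The error is the bound on $R$: in the assumption~\eqref{assumption}, $R = b_m$ is the right-hand side of the primal constraint whose coefficient is the EJA identity $\mme$, and for SES this constraint is the simplex-like normalization $\Tr(\mmx) + \Tr(\mmx') = \sqrt{2}$ obtained by scaling the last equality in~\eqref{primalses}. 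Thus $R = \sqrt{2}$ is an absolute constant, not $O(\alpha)$; it is determined by the cone geometry (specifically, that $\mme = (\mmzero;\sqrt{2})$ in $\cc{Q}^{d+1}$), not by the input radii. With the correct $R = O(1)$ and $\rho = O(D)$, one gets $T = O(D^2\log n/(\eps^2\alpha^2)) = O(\log n/\eps^2)$ because the search range is $\alpha\in[D/2, D]$ where $D = \max_{i\ge 2}\|\mmv_1-\mmv_i\|_2 + \gamma_1 + \gamma_i$.

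A secondary issue: your suggestion of ``augmenting the dual with a redundant upper bound on $\gamma$'' to enforce assumption~\eqref{assumption} is both unnecessary and would not straightforwardly work. The paper shows the assumption is satisfied by the unaugmented~\eqref{eq:ses} formulation after a $\sqrt{2}$-rescaling of the last primal constraint; the scalar coefficient $1$ of $\gamma$ is not the EJA identity, so a redundant scalar bound like $\gamma\le D$ does not directly supply an $\mma_m = \mme$. You also correctly flag the width bound as the critical step that needs a geometric argument; the paper supplies it via a triangle-inequality chain showing $\rho \le \frac{1}{\sqrt{2}}(2\alpha + D) \le \frac{3D}{\sqrt{2}}$ for any $\alpha\le D$, using the SOC constraint defining $\cc{C}$ to bound $\|\mmu - \mmv_1\|_2 \le \alpha - \gamma_1$ for the ORACLE's output. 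So the structure of your proposed argument is right, but the two explicit bounds on $R$ and $\rho$ that you left unverified are both load-bearing, and one of them ($R$) is misidentified in a way that breaks the final count.
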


\begin{proof}
Noting that 
\[
  \|\mmu-\mmv_i\|_2 \le \gamma - \gamma_i \iff (\mmu;\gamma)-(\mmv_i;\gamma_i)\in \cc{Q}^{d+1},
\]
we can write the \eqref{eq:ses} problem as an SOCP:
\[
  {\min_{\mmu, \gamma}} \left\{  \gamma:    (\mmu; \gamma) \succeq_{\cc{Q}^{d+1}} (\mmv_i; \gamma_i), \ i=1,\ldots, n\right\}.
\]
For the \eqref{eq:ses} problem, distinguishing between easy and hard constraints is very natural. The ``easy''  portion is derived by keeping only one of the second-order cone constraints (for instance, the first one), while the ``hard''  part is defined by  the intersection of all other constraints, i.e.
  \[\begin{aligned}
    \cc{C} &= \{(\mmu;\gamma)\in \bR^{d+1}: \|\mmu - \mmv_1\|_2 \le \gamma - \gamma_1 \},\\
    \text{and} \quad
    \cc{F} &= \{(\mmu; \gamma) \in \bR^{d+1} : \|\mmu - \mmv_i\|_2 \le \gamma - \gamma_i, \ \text{for}\ i = 2,\dots, n \}.
\end{aligned}\]

Following this splitting, we express the~\eqref{eq:ses} problem as a pair of primal/dual SCPs in the form of~\eqref{eq:klp}.  Setting   $\cc{K} = \cproduct_{i=2}^{n} \cc{Q}^{d+1}$ and $\cc{K}' = \cc{Q}^{d+1}$, the dual formulation is given by:
\begin{equation}\label{dualses}
\begin{aligned} 
        \min \quad   & {\underbrace{({\bf 0};1)}_{\mmb}}^T \underbrace{(\mm{u};\gamma)}_{\mmy}                                           \\
        \subto \quad & \Big( \sum_{j=1}^{d} u_j \underbrace{ \cproduct_{i=2}^n (\mm{e}_j;0)}_{\mma_j} \Big) + \gamma \underbrace{\cproduct_{i=2}^n({\bf 0}; 1)}_{\mma_{d+1}} \succeq_\cc{K}  \underbrace{ \cproduct_{i=2}^n(\mmv_i;\gamma_i)}_{\mmc} \\
& \Big( \sum_{j=1}^{d} u_j \underbrace{  (\mm{e}_j;0)}_{\mma'_j} \Big) + \gamma \underbrace{ ({\bf 0}; 1)}_{\mma'_{d+1}} \succeq_{\cc{K}'}  \underbrace{ (\mmv_1;\gamma_1)}_{\mmc'}. 
                   \end{aligned}
                    \end{equation}
 On the other hand, the primal formulation is given by:
                
\begin{equation}\label{primalses}
\begin{aligned}
\max \quad   & \sum_{i=1}^n (\mmv_i; \gamma_i)\bullet \mmx_i
\\  \subto \quad 
& \underbrace{ \Big( \cproduct_{i=2}^n (\mm{e}_j;0) \Big)}_{\mma_j} \bullet \mm{x}+ \underbrace{ (\mm{e}_j;0)}_{\mma'_j}\bullet \mm{x}'=0, \ j=1,\ldots,d\\ 
& 
\underbrace{ \Big(\cproduct_{i=2}^n ({\bf 0};1)\Big)}_{\mma_{d+1}} \bullet\mm{x}+\underbrace{ ({\bf 0}; 1)}_{\mma'_{d+1}} \bullet \mm{x}'=1\\   
& \mm{x}=(\mm{x}_1; \ldots; \mm{x}_{n-1}) \in \cc{K}, \ \mmx' = \mmx_n \in \cc{K}'
\end{aligned}
\end{equation} 

Examining \eqref{primalses} and \eqref{dualses} we see that assumption \eqref{assumption} is implicitly satisfied. Specifically, if we scale the last constraint in~\eqref{primalses} by $\sqrt{2}$, we get an equivalent problem where the constraint becomes
\[
    \underbrace{ \Big(\cproduct_{i=2}^n ({\bf 0}; \sqrt{2})\Big)}_{\mma_{d+1}} \bullet\mm{x}+\underbrace{ ({\bf 0}; \sqrt{2})}_{\mma'_{d+1}} \bullet \mm{x}' = \sqrt{2},
\]
and we have $m=d+1$, $\mm{a}_{d+1} = \cproduct_{i=2}^n ({\bf 0}; \sqrt{2}) = \mm{e}$ (the identity in $\cc{K}$) and $\mm{a}'_{d+1}=({\bf 0};\sqrt{2})\in \cc{K}'$. Moreover, we see $b_{d+1} = R = \sqrt{2}$. Note that we don't need to perform the scaling explicitly, but the scaled constraint helps in deriving the final result.

Defining $D = \max_{i\ge 2} \|\mmv_1 - \mmv_i\|_2 + \gamma_1 + \gamma_i$, then a sphere centered at {$\mmv_1$}
with radius $D$ will enclose $\cc{I}$, so $\gamma^* \le D$. The distance of the farthest pair of points in $\cc{I}$ is at least $D$, so $\gamma^* \ge \frac{D}{2}$. To solve the SES problem, we can search for the optimal radius in the range $[\frac{D}{2}, D]$.

Fix an error parameter $\eps>0$ and a guess $\alpha>0$ for the optimal value {\sf OPT}.   Let $\xt{\mmp}{t}$ be the vector that parameterizes $\xt{\cc{H}}{t}$ in the $t$-th iteration: it lies in the product cone $\cc{K} = \cproduct_{i=2}^n \cc{Q}^{d+1}$ that corresponds to $\cc{F}$. Let $(\xt{\mmw}{t}_i; \xt{s}{t}_i) \in \cc{Q}^{d+1}$, where $\xt{\mmw}{t}_i \in \bR^d$ and $\xt{s}{t}_i\in \bR$, be the $(i-1)^{th}$ component of $\xt{\mmp}{t}$. 
In other words, $\xt{\mmp}{t} = \cproduct_{i=2}^n (\xt{\mmw}{t}_i; \xt{s}{t}_i)$.
The convex program that  the {\sf ORACLE} needs to solve to check whether $\xt{\cc{H}}{t}_+ \cap \cc{C}\cap \cc{S}_\alpha = \varnothing$ (recall \eqref{eq:oracle_program})
is given by:
    \[\begin{aligned}
        \max_{\mmu, \gamma} \quad & \sum_{i=2}^n (\mmu; \gamma) \bullet (\xt{\mmw}{t}_i; \xt{s}{t}_i) - \sum_{i=2}^n (\mmv_i; \gamma_i) \bullet (\xt{\mmw}{t}_i; \xt{s}{t}_i) \\
        \subto \quad & \gamma \le \alpha, \quad \|\mmu - \mmv_1\|_2 \le \gamma - \gamma_1.
    \end{aligned}\]
    Since $\sum_{i=2}^n \xt{s}{t}_i = \frac{1}{\sqrt{2}}\Tr(\xt{\mmp}{t}) = \frac{1}{\sqrt{2}}$ (see Section~\ref{sec:eja}), the problem can be simplified to:
    \begin{equation}\label{eq:ses_oracle}
        \begin{aligned}
            \max_{\mmu, \gamma} \quad & \left(\sum_{i=2}^n \xt{\mmw}{t}_i\right)^T \mmu + \frac{\gamma}{\sqrt{2}} - \sum_{i=2}^n \mmv_i^T \xt{\mmw}{t}_i - \sum_{i=2}^n \gamma_i \xt{s}{t}_i \\
            \subto \quad              & \gamma \le \alpha, \quad \|\mmu - \mmv_1\|_2 \le \gamma - \gamma_1
        \end{aligned}
    \end{equation}
Notice that once $(\xt{\mmw}{t}_i; \xt{s}{t}_i)$ are fixed, the last two terms in the objective are constants, and increasing the value of $\gamma$ only relaxes the constraint for
{$\mmu$, resulting in}
a larger objective value. Thus, the optimal value of~\eqref{eq:ses_oracle} will be achieved at $\gamma = \alpha$. Once $\gamma$ is fixed, the problem boils down to optimizing a linear function over a ball, which admits a simple closed-form solution.  

Next, we show  that the width $\rho$ of the {\sf ORACLE} is at most $\frac{3D}{\sqrt{2}}$. As explained above, at each iteration the {\sf ORACLE} returns a point $(\mmu;\alpha)$ that satisfies
\begin{equation}\label{oracleineq}  
  \|\mmu - \mmv_1\|_2 \le \alpha - \gamma_1.
\end{equation}
The width then equals the maximum value of 
\[                  %
  \Big|\lambda_k \big(\cproduct_{i=2}^n ({\mmu} - \mmv_i; {\alpha}  - \gamma_i )\big)\Big|,
\]
over all $\mmu, k$ that satisfy
$\|\mmu - \mmv_1\|_2 \le \alpha - \gamma_1$ and $k \in [r]$, which is
\[
  \max_{i\ge 2} \frac{1}{\sqrt{2}}(\alpha - \gamma_i + \| {\mmu}  - \mmv_i\|_2).
\]
Finally, we have  
\[\begin{aligned}
  &\max_{i\ge 2} \frac{1}{\sqrt{2}} (\alpha- \gamma_i + \|\mmu - \mmv_i\|_2) 
  \overset{(a)}{\le} \frac{1}{\sqrt{2}}(\alpha + \|\mmu - \mmv_1\|_2 + \max_{i\ge 2}\|\mmv_1 - \mmv_i\|_2)\\
  &\overset{(b)}{\le} \frac{1}{\sqrt{2}} (\alpha + \alpha - \gamma_1 + \max_{i\ge 2}\|\mmv_1 - \mmv_i\|_2)
  \overset{(c)}{\le} \frac{1}{\sqrt{2}}(2 \alpha + D)  \overset{(d)}{\le} \frac{3D}{\sqrt{2}},
\end{aligned}\]
where $(a)$ is due to the triangle inequality, $(b)$ follows from~\eqref{oracleineq}, $(c)$ follows from the definition of $D$, and $(d)$ is because $\alpha \le D$.

By Theorem \ref{thm:meta} we know that if we execute  the primal-dual meta-algorithm for $T = \left\lceil\frac{4 R^2\rho^2 \ln r}{\eps^2\alpha^2}\right\rceil$ iterations, we either correctly conclude infeasibility or we find a solution with value $(1+\eps)\alpha$. For the latter case, a sphere centered at $\bar{\mmu} = \frac{1}{T}\sum_t \xt{\mmu}{t}$ with radius $(1+\eps)\alpha$ will enclose all spheres in $\cc{I}$.
As discussed above, for problem~\eqref{eq:ses}, we have $R = \sqrt{2}, \rho\le \frac{3D}{\sqrt{2}}$ and $r = 2(n-1)$. The number of iterations $T$ can be simplified to $\left\lceil\frac{36D^2\ln (2n - 2)}{\eps^2\alpha^2}\right\rceil$, which is $O\left(\frac{D^2\log n}{\eps^2\alpha^2}\right)$.

We now analyze the overall running time of the algorithm. For the SES problem, since we obtained a natural lower and upper bound on the value of $\alpha$, where $L_0 = \frac{D}{2}$ and $U_0 = D$. Therefore, the technique of reducing optimization problems to feasibility problems discussed in Section~\ref{sec:algorithm} applies. The total number of iterations required for solving the optimization problem is dominated by the number of iterations required for the last feasibility problem, which is $O(\frac{D^2\log n}{\eps^2\OPT^2})$ and can be further simplified to $O(\frac{\log n}{\eps^2})$ because $\OPT \ge \frac{D}{2}$. The computation in each iteration of the algorithm includes: $(a)$ Solving~\eqref{eq:ses_oracle} for test of separation. This involves computing the coefficients of the objective function and testing the intersection of a halfspace and a hyperball, which requires $O(nd)$ time. $(b)$ Performing a {SCWMU} step for adjusting the hyperplane. This involves computing the loss vector and its spectral decomposition and performing the exponentiation and normalization step, which requires $O(nd)$ time in total. 
Combining the iteration complexity and the running time of each iteration, we conclude that the overall running time of our algorithm is $O(\frac{nd \log n}{\eps^2})$.
\end{proof}

\begin{theorem}\label{thm:ses_parallel}
The algorithm described in Theorem~\ref{thm:ses} can be parallelized under the setting of the work-span model.
The parallel algorithm requires $O\Big(\frac{nd \log n}{\eps^2}\Big)$ work and has $O\Big(\frac{\log(nd)\log n}{\eps^2}\Big)$ span.
\end{theorem}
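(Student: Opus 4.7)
The plan is to reuse the iteration count and per-iteration structure from the proof of Theorem~\ref{thm:ses}, and to show that each iteration can itself be executed in low span and linear work by reducing the non-trivial subroutines to information-aggregation primitives (summation, extremum, and inner products) that are analyzed in Section~\ref{sec:scmwu}. Since successive iterations are inherently sequential (the SCMWU update at step $t+1$ needs $\xt{\mmp}{t+1}$, which depends on $\xt{\mmy}{t}$), the overall span will be $T$ times the span of one iteration, where $T = O(\log n / \eps^2)$ as established in Theorem~\ref{thm:ses}.

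The first step is to parallelize the {\sf ORACLE}, i.e.\ solving~\eqref{eq:ses_oracle}. After observing (as in the sequential analysis) that the optimal $\gamma$ is $\alpha$, the remaining problem is maximizing a linear function over a Euclidean ball, which has a closed-form solution once the coefficient vector is computed. Computing the coefficient $\sum_{i=2}^n \xt{\mmw}{t}_i$ is a summation of $n{-}1$ vectors in $\bR^d$; by the work-span analysis of the aggregation primitive, this takes $O(nd)$ work and $O(\log n)$ span (each coordinate is summed independently in parallel). Similarly, the constant term $\sum_{i=2}^n \mmv_i^T \xt{\mmw}{t}_i$ requires $n-1$ inner products (each: $O(d)$ work, $O(\log d)$ span) followed by a summation of $n-1$ scalars, for a total of $O(nd)$ work and $O(\log(nd))$ span. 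Evaluating the closed-form minimizer on the ball then costs $O(d)$ additional work and $O(1)$ span.

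Next, I would parallelize the SCMWU hyperplane-adjustment step~\eqref{implement}. The loss vector $\xt{\mmm}{t}$ is obtained by computing, in parallel over $i=2,\dots,n$, the EJA element $(\xt{\mmu}{t}-\mmv_i;\,\alpha-\gamma_i)\in\cc{Q}^{d+1}$, at $O(nd)$ work and $O(1)$ span. By Section~\ref{sec:scmwu}, computing the exponential of an element in a single $\cc{Q}^{d+1}$ factor costs $O(d)$ work and $O(\log d)$ span (dominated by the $\|\cdot\|_2$ computation), and since the product cone has $n-1$ independent components, all factor-wise exponentials can be computed simultaneously in $O(nd)$ work and $O(\log d)$ span. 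The trace $\Tr(\mmexp(\cdot))$ is then the sum of $2(n-1)$ eigenvalues, costing $O(n)$ work and $O(\log n)$ span, after which the normalization is a scalar-multiplication that parallelizes trivially in $O(nd)$ work and $O(1)$ span. Aggregating these contributions, one iteration takes $O(nd)$ work and $O(\log(nd))$ span.

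Finally, I would combine the per-iteration bounds with the iteration count $T = O(\log n / \eps^2)$ from Theorem~\ref{thm:ses}. Because the $T$ iterations must be executed sequentially, the total work is $T \cdot O(nd) = O(nd \log n / \eps^2)$ and the total span is $T \cdot O(\log(nd)) = O(\log(nd)\log n / \eps^2)$, as claimed. I don't expect any real obstacles here: the main subtlety is simply to verify that every non-trivial primitive in the iteration (vector sums, inner products, trace, exponential of a single second-order-cone element) is either a standard aggregation or admits a closed-form vectorizable evaluation, so that no hidden sequential bottleneck inflates the span beyond $O(\log(nd))$.
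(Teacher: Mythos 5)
Your proposal is correct and follows essentially the same approach as the paper's proof: parallelize each iteration by reducing the {\sf ORACLE} evaluation and the SCMWU update to information-aggregation primitives (vector sums, inner products, norms, trace) with $O(nd)$ work and $O(\log(nd))$ span, then multiply by the iteration count $T = O(\log n/\eps^2)$ from Theorem~\ref{thm:ses}. The paper states this more tersely, while you spell out the per-primitive work/span bookkeeping explicitly; the substance is identical.
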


\begin{proof}
To parallelize the algorithm for the SES problem, we parallelize the computation during each iteration. The computation in each iteration of the algorithm involves computing the analytical solution of~\eqref{eq:ses_oracle} and updating an element of the product cone $\cproduct_{i=2}^n \cc{Q}^{d+1}$ via {SCMWU}, which only requires simple arithmetic and information aggregation operations (summations or extremum queries) on vectors, and can be trivially parallelized.
The work of each iteration of the parallel algorithm is the same as the time complexity of the sequential version, while the span (or parallel depth) becomes $O\left(\log(nd)\right)$.
\end{proof}

\subsection{Support vector machine}\label{sec:pd}
The Support Vector Machine (SVM) is one of the most successful machine learning tools.
{Its}
efficacy is rooted in its inherent simplicity, coupled with exceptional generalization properties applicable to both classification and regression tasks.
The vanilla SVM problem~\cite{svm:cortes1995support} is to find a hyperplane that separates two sets of data points with maximum margin, which is closely related to the geometric problem of finding the closest points in two convex polytopes (a.k.a. polytope distance)~\cite{svm_goemetry:bennett2000duality}. The soft-margin variants of SVM, such as $C$-SVM~\cite{svm:cortes1995support} and $\nu$-SVM~\cite{v-svm:scholkopf2000new}, can also be interpreted as polytope distance (PD) problems for reduced polytopes~\cite{svm_goemetry:bennett2000duality,svm_goemetry:bi2003geometric, svm_geometry:NIPS1999_7fea637f}.
The algorithms designed for SVM are usually based on optimization techniques. A milestone in this direction is the proposal of the sequential minimal optimization (SMO) method~\cite{svm_smo:platt1998sequential}, which can be viewed as a block coordinate descent method and is widely used in practice. Another line of research focuses on solving the PD problem using geometric 
{approaches: remarkable}
algorithms in this modality include the Gilbert-Schlesinger-Kozinec (GSK) method~\cite{svm_gsk:franc2003iterative} and the Mitchell-Dem'yanov-Malozemov (MDM) method~\cite{svm_mdm:mitchell1974finding}. In~\cite{svm_gsk:gartner2009coresets}, the authors also proved that the size of core-set for the SVM/PD problem is $\Theta(\frac{E}{\eps})$, where $E$ is the excentricity of the problem
{instance, which is a quantity that measures its difficulty.}

In this section we study the SVM problem in the separable case. We 
{formulate the problem}
as a pair of primal/dual SCPs and apply the meta-algorithm to develop an algorithm that produces solutions for both problems in nearly linear time.
Given two point sets $\cc{P} = \{\mmu_1, \dots, \mmu_{n_1}\}$ and $\cc{Q} = \{\mmv_1, \dots, \mmv_{n_2}\}$ in $\bR^{d}$, 
we define the matrices $\mmP = (\mm{u}_1, \dots, \mm{u}_{n_1})$ and  $\mmQ = (\mm{v}_1, \dots, \mm{v}_{n_2})$,
where $\mmu_i$ is the $i$-th column of $\mmP$ and, similarly, $\mmv_i$ the $i$-th column of $\mmQ$. We can write 
SVM as the optimization problem {(e.g.\ see~\cite{svm_socp:shivaswamy2006second})}:
\begin{equation}
    \tag{SVM}
    \label{eq:svm}
    \begin{aligned}
        \max_{\mmw, s_1, s_2}\quad & s_1 + s_2                             \\
        \subto\quad                & \mmP^T \mmw - s_1 \mmone \ge \mmzero  \\
                                   & -\mmQ^T \mmw - s_2 \mmone \ge \mmzero \\
                                   & \|\mmw\|_2 \le 1
    \end{aligned}
\end{equation}
{If $(\mmw^*; s_1^*; s_2^*)$ is}
the optimal solution to the above problem, 
{then}
the maximum margin of SVM is given by $s_1^* + s_2^*$ while $\mmw^*$ is the normal vector of the separating hyperplane.
The counterpart of~\eqref{eq:svm} is the PD problem, whose objective is to compute the minimum distance between the convex polytopes corresponding to $\cc{P}$ and $\cc{Q}$:
\begin{equation}
    \tag{PD}
    \label{eq:pd}
    \begin{aligned}
        \min \quad   & \|\mmP \mmmu - \mmQ \mmgamma\|_2       \\
        \subto \quad & \mmone^T \mmmu = \mmone^T \mmgamma = 1 \\
                     & \mmmu, \mmgamma \geq \mmzero
    \end{aligned}
\end{equation}
{In the following, we view~\eqref{eq:svm} as a dual SCP in the form of~\eqref{eq:min_scp}.
The following lemma is useful for us to add additional easy constraints to the problem that help to design the {\sf ORACLE} without altering the solution to the original problem. 

\begin{lemma}\label{lem:svm_bound}
    Let $D$ be the maximum norm of the input points in $\cc{P}$ and $\cc{Q}$. 
    If $(\mmw; s_1; s_2)$ is a feasible solution of~\eqref{eq:svm}, then $s_1, s_2 \le D$.
\end{lemma}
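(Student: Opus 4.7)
The plan is to apply the Cauchy--Schwarz inequality directly to the first two constraints of~\eqref{eq:svm}, exploiting the fact that the third constraint bounds the norm of $\mmw$ by $1$. No substantive geometric reasoning is required; the bound essentially follows from ``inner product $\le$ product of norms.''

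First I would fix any feasible triple $(\mmw; s_1; s_2)$ and unfold the constraint $\mmP^T \mmw - s_1 \mmone \ge \mmzero$ componentwise, writing it as $\mmu_i^T \mmw \ge s_1$ for every $i \in [n_1]$. For each such $i$, Cauchy--Schwarz gives $\mmu_i^T \mmw \le \|\mmu_i\|_2 \, \|\mmw\|_2$, and the third constraint $\|\mmw\|_2 \le 1$ together with the definition $D = \max_{\mmu_i \in \cc{P},\, \mmv_j \in \cc{Q}} \|\mmu_i\|_2, \|\mmv_j\|_2$ yields $\mmu_i^T \mmw \le D$. Chaining these inequalities gives $s_1 \le D$.

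The same argument, applied to $-\mmQ^T \mmw - s_2 \mmone \ge \mmzero$, yields $s_2 \le -\mmv_j^T \mmw \le \|\mmv_j\|_2 \|\mmw\|_2 \le D$ for every $j \in [n_2]$. Since both inequalities hold for arbitrary feasible solutions, we conclude $s_1, s_2 \le D$.

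There is no real obstacle here: the result is a one-line consequence of Cauchy--Schwarz combined with the normalization $\|\mmw\|_2 \le 1$. The only subtlety worth checking is that the bound uses the maximum norm across \emph{both} point sets $\cc{P}$ and $\cc{Q}$, which is consistent with how $D$ is defined in the lemma statement, ensuring the same constant $D$ can be used to bound both $s_1$ and $s_2$.
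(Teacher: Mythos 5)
Your proof is correct and follows essentially the same approach as the paper: unfold the packing constraints componentwise to get $s_1 \le \mmu_i^T \mmw$ (and similarly $s_2 \le -\mmv_j^T \mmw$), then apply Cauchy--Schwarz together with $\|\mmw\|_2 \le 1$ to bound the inner products by $D$.
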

\begin{proof}
    Let $(\mmw; s_1; s_2)$ be any feasible solution to (\ref{eq:svm}).
    From the constraint $\mmP^T\mmw - s_1\mmone \ge 0$, we see that for any $\mmu_i \in \cc{P}$:
    \[
        s_1 \le \mmu_i^T \mmw \le \|\mmu_i\|_2.
    \]
    Since $\|\mmu_i\|_2 \le D \ \forall \mmu_i \in \cc{P}$, this gives:
    \[
        s_1
        \le \max_{\mmu_i\in \cc{P}} \|\mmu_i\|_2 \le D.
    \]
    In the same way, one can prove that $s_2\le D$.
\end{proof}

\begin{theorem}\label{thm:svm}
Let $E = \frac{D^2}{\OPT^2}$ be an instance-specified parameter.
Applying the primal-dual meta-algorithm to problem~\eqref{eq:svm}, we can compute a $(1-\eps)$-approximate solution to~\eqref{eq:svm} and a $(1 + \eps)$-approximate solution to~\eqref{eq:pd} in $O\Big(\frac{E\, n d\log n}{\eps^2}\Big)$ time, where $n = n_1 + n_2$.
\end{theorem}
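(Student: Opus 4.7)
I will instantiate the minimization meta-algorithm (Theorem~\ref{thm:min_meta}) with~\eqref{eq:svm} as the dual SCP and~\eqref{eq:pd} as the primal. Taking $\mmy=(\mmw;s_1;s_2)$ and $\mmb=(\mmzero;1;1)$, the dual inequalities split so that the hard set $\cc{F}$ is $\{\mmy:\mmP^T\mmw\ge s_1\mmone,\ -\mmQ^T\mmw\ge s_2\mmone\}$ over $\cc{K}=\bR^n_+$ (with $n=n_1+n_2$ and $\mmc=\mmzero$), and the easy set $\cc{C}$ is $\{\mmy:\|\mmw\|_2\le 1\}$ over $\cc{K}'=\cc{Q}^{d+1}$ with $\mmc'=(\mmzero;1)$. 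A direct calculation verifies that the corresponding primal SCP, after identifying the SOC primal variable $\mmx'$ as $(\mmQ\mmgamma-\mmP\mmmu;t)$, coincides with~\eqref{eq:pd}; hence, the two problems share a common $\OPT$ by strong duality. To satisfy assumption~\eqref{assumption}, I append the redundant primal equation $\mmone^T\mmmu+\mmone^T\mmgamma=2$ (implied by the two probability constraints), which produces one extra dual variable $y_{d+3}$ with $\mma_{d+3}=\mmone_n=\mme$, $\mma'_{d+3}=\mmzero\in\cc{K}'$, and $b_{d+3}=R=2$. The substitution $\tilde{s}_k\triangleq s_k+y_{d+3}$ shows that the enlarged dual is equivalent to~\eqref{eq:svm}, so the augmentation leaves the optimum unchanged.

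At iteration $t$ the \textsf{ORACLE} of~\eqref{eq:min_oracle} with input $\xt{\mmp}{t}\in\bR^n_+$ is a linear maximization in $(\mmw;s_1;s_2;y_{d+3})$ under $\|\mmw\|_2\le 1$ and $\mmb^T\mmy\ge\alpha$. It decouples into a linear optimization over the unit ball (closed form in $\mmw$, given the two weighted barycenters $\sum_i\xt{p}{t}_i\mmu_i$ and $\sum_i\xt{p}{t}_{n_1+i}\mmv_i$) plus a small LP in the scalar variables, solvable in $O(nd)$ time. Following the remark on $\cc{C}$ in Section~\ref{sec:algorithm}, I further restrict $\cc{C}$ to a subset still containing the dual optimum by appending the box constraints $0\le s_1,s_2\le D$ and $|y_{d+3}|\le D$, which are valid at $y^*_{d+3}=0$ by Lemma~\ref{lem:svm_bound}. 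Each entry of $\sum_j\mma_j y_j$ is then of the form $\pm\mmu_i^T\mmw+s_1+y_{d+3}$ or $\pm\mmv_i^T\mmw+s_2+y_{d+3}$, so Cauchy--Schwarz combined with $\|\mmu_i\|,\|\mmv_i\|\le D$ gives width $\rho=O(D)$.

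Plugging $R=2$, $\rho=O(D)$, $r=n$, and the initial bracket $\OPT\in[L_0,U_0]$ with $U_0=O(D)$ from Lemma~\ref{lem:svm_bound} into Theorem~\ref{thm:min_meta}, the iteration count for the final $\alpha$-feasibility test (which dominates the binary search of Section~\ref{sec:algorithm}) is
\[
T=O\!\left(\frac{R^2\rho^2\log n}{\eps^2\OPT^2}\right)=O\!\left(\frac{D^2\log n}{\eps^2\OPT^2}\right)=O\!\left(\frac{E\log n}{\eps^2}\right).
\]
Since each iteration costs $O(nd)$ for the \textsf{ORACLE} plus $O(n)$ for the SCMWU update on $\bR^n_+$, the overall running time is $O(End\log n/\eps^2)$; the $(1{+}\eps)$-approximate solution to~\eqref{eq:pd} is recovered via~\eqref{eq:primal_construction} from the final $\xt{\mmp}{T}$. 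I expect the delicate step to be the joint control of $R$ and $\rho$: \eqref{eq:svm} does not natively satisfy~\eqref{assumption}, and naive augmentations can inflate either $R$ (through a large right-hand side) or $\rho$ (through an unbounded new dual variable), destroying the $E$-dependence. It is precisely the redundant probability-sum constraint, together with the $\cc{C}$-restriction to a bounded box still containing the optimum, that produces $R\rho=O(D)$ and hence the advertised complexity.
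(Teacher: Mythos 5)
Your overall strategy matches the paper's: split into the polyhedral hard set $\cc{F}$ over $\cc{K}=\bR^n_+$ and a ball-type easy set $\cc{C}$, enforce $R\rho=O(D)$ by bounding the scalar dual variables, and deduce $T=O(E\log n/\eps^2)$ with $O(nd)$ work per iteration. You realize assumption~\eqref{assumption} by introducing an explicit redundant dual variable $y_{d+3}$ with $\mma_{d+3}=\mme$, $b_{d+3}=2$; the paper instead takes $\cc{K}'=\cc{Q}^{d+1}\cproduct\R^2_+$ so the box bounds live inside $\cc{C}$ and then observes that \emph{summing} the two primal probability constraints already gives the $(\mme,R=2)$ constraint. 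Both routes are viable, but there are two genuine gaps in your write-up.

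First, the box bound $0\le s_1,s_2$ is not justified and is in fact false in general. Lemma~\ref{lem:svm_bound} gives only the upper bound $s_1,s_2\le D$; it says nothing about a lower bound of~$0$. One of $s_1^*,s_2^*$ can be negative: translate both point sets to lie, say, entirely in the half-space $\{\mmx:\mmx_1>0\}$ — then $\mmw^*\approx\mme_1$ yields $s_2^*=-\max_i\mmv_i^T\mmw^*<0$. Since you invoke the aggressive $\cc{C}'\subseteq\cc{C}$ restriction, $\cc{C}'$ must contain the dual optimum $\mmy^*$, and the constraint $s_2\ge 0$ can exclude it, breaking the correctness of the separation test. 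The usable lower bound is $s_k^*\ge\OPT-D>-D$ (since $s_1^*+s_2^*=\OPT>0$ and each $\le D$), or — as the paper does — one need not impose a lower bound on $s_k$ in $\cc{C}$ at all: the ORACLE's constraint $\mmb^T\mmy\ge\alpha$ together with $s_k\le D$ already forces $s_k\ge\alpha-D\ge-D$ on the returned point, which is all the width bound requires.

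Second, your last sentence recovers the $(1+\eps)$-approximate \eqref{eq:pd} solution ``via~\eqref{eq:primal_construction},'' but this leans on exactly the mechanism the remark on $\cc{C}$ warns against: the generic primal construction via strong duality of the ORACLE program requires $\cc{C}$ to be precisely the set cut out by $\cc{K}'$, not an aggressively shrunken $\cc{C}'$. The paper handles this by proving a problem-specific Claim (with two independent proofs) that when the ORACLE reports emptiness, the normalized components $\bigl(\xt{\mmmu}{t}/\Tr(\xt{\mmmu}{t});\,\xt{\mmgamma}{t}/\Tr(\xt{\mmgamma}{t})\bigr)$ of the SCMWU iterate $\xt{\mmp}{t}$ form a feasible point of~\eqref{eq:pd} with value at most $\alpha$. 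Under your formulation with the extra variable $y_{d+3}$ and restricted $\cc{C}'$, you still need an analogous explicit argument (either by working through the dual of your restricted ORACLE LP, or by a direct scaling argument as in the paper's first proof of the Claim); asserting it falls out of~\eqref{eq:primal_construction} does not close the gap. Once you replace $0\le s_k$ by $-D\le s_k$ (or drop the lower bound and use the $\cc{S}_\alpha$ cut) and supply the primal-recovery claim, the rest of your argument — the width $\rho=O(D)$, $R=2$, $r=n$, and the binary-search bookkeeping — goes through and matches the paper's bound.
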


\begin{proof}
    By Lemma~\ref{lem:svm_bound}, problem~\eqref{eq:svm} is equivalent to:
    \beq\label{eq:svm_new_form}
    \begin{aligned}
        \max_{\mmw, s_1, s_2}\quad & s_1 + s_2                             \\
        \subto\quad                & \mmP^T \mmw - s_1 \mmone \ge \mmzero  \\
                                   & -\mmQ^T \mmw - s_2 \mmone \ge \mmzero \\
                                   & \|\mmw\|_2 \le 1,\ s_1 \le D,\ s_2 \le D.
    \end{aligned}
    \eeq
    It is natural to split the constraints into ``easy'' and ``hard'' parts as follows:
    \beq
    \begin{aligned}
        \cc{C} &= \{(\mmw; s_1; s_2) \in \bR^{d + 2} : \|\mmw\|_2\le 1, s_1 \le D, s_2\le D \}, \\
        \text{and}\quad    
        \cc{F} &= \{(\mmw; s_1; s_2) \in \bR^{d + 2} : \mmP^T \mmw - s_1 \mmone \ge \mmzero,\ -\mmQ^T \mmw - s_2 \mmone \ge \mmzero\}.
    \end{aligned}
    \eeq
Following this splitting, we express problem~\eqref{eq:svm_new_form} as a dual SCP in the form of~\eqref{eq:min_scp}.
Let $\mmP_j$ be the $j$-th column of $\mmP$ and $\mmQ_j$ the $j$-th column of $\mmQ$.
Setting $\cc{K} = \R^{n}_+$
and $\cc{K}' = \cc{Q}^{d+1}\cproduct \R^2_+$, the formulation is given by:
\beq\label{eq:svm_dual_scp}
\begin{aligned}
    \max \quad & {\underbrace{(\mmzero; 1; 1)}_{\mmb}}^T \underbrace{(\mmw; s_1; s_2)}_{\mmy}\\
    \subto \quad & \Big(\sum_{j=1}^d w_j \underbrace{(-\mmP^T_j; \mmQ^T_j)}_{\mma_j}\Big) + s_1 \underbrace{(\mmone_{n_1}; \mmzero)}_{\mma_{d+1}} + s_2 \underbrace{(\mmzero; \mmone_{n_2})}_{\mma_{d+2}} \preceq_\cc{K} \underbrace{(\mmzero; \mmzero)}_{\mmc} \\
    & \Big(\sum_{j=1}^d w_j \underbrace{(\mme_j; 0)\cproduct(0; 0)}_{\mma_j'} \Big) + s_1 \underbrace{(\mmzero; 0)\cproduct (1; 0)}_{\mma_{d+1}'} + s_2 \underbrace{(\mmzero; 0) \cproduct (0; 1)}_{\mma_{d+2}'} \preceq_{\cc{K}'} \underbrace{(\mmzero; 1)\cproduct (D; D)}_{\mmc'}
\end{aligned}
\eeq
On the other hand, the primal formulation is given by:
\beq\label{eq:svm_primal_scp}
\begin{aligned}
    \min \quad & \big((\mmzero; 1)\cproduct(D; D)\big) \bullet \mmx' \\
    \subto \quad & \underbrace{(-\mmP_j^T; \mmQ_j^T)}_{\mma_j} \bullet \mmx + \underbrace{\big((\mme_j; 0)\cproduct(0; 0)\big)}_{\mma_j'} \bullet \mmx' = 0, \ j = 1,\ldots, d\\
    & \underbrace{(\mmone_{n_1}; \mmzero)}_{\mma_{d+1}} \bullet \mmx + \underbrace{\big((\mmzero; 0)\cproduct( 1; 0)\big)}_{\mma_{d+1}'} \bullet \mmx' = 1\\
    & \underbrace{(\mmzero; \mmone_{n_2})}_{\mma_{d+2}} \bullet \mmx + \underbrace{\big((\mmzero; 0)\cproduct(0; 1)\big)}_{\mma_{d+2}'} \bullet \mmx' = 1\\
    & \mmx \in \cc{K}, \ \mmx' \in \cc{K}'
\end{aligned}
\eeq
Notice that by combining the last two constraints, we obtain an implicit constraint
\[
    \mmone_{n}\bullet\mmx + \big((\mmzero; 0)\cproduct( 1; 1)\big) \bullet \mmx' = 2.
\]
Here $\mmone_n = \mme$ (the identity in $\cc{K}$) and $(\mmzero; 0) \cproduct (1;1)\in \cc{K}'$. Therefore, assumption~\eqref{assumption} is implicitly satisfied by~\eqref{eq:svm_dual_scp} and~\eqref{eq:svm_primal_scp}. Moreover, we have $R = 2$.

As we assume that the point sets are separable, 0 is a lower bound for the optimal margin. From Lemma~\eqref{lem:svm_bound}, we also obtain an upperbound $s_1^* + s_2^* \le 2D$. To solve the SVM problem, we search for the optimal margin in the range $(0, 2D]$.

Fix an error tolerance $\eps>0$ and a guess $\alpha\in (0, 2D]$ for the optimal value $\OPT$. Let $\xt{\mmp}{t}$ be the vector that parameterizes the hyperplane $\xt{\cc{H}}{t}$: it lies in the cone $\cc{K} =\bR^{n}_+$. Let $\xt{\mmmu}{t}$ and $\xt{\mmgamma}{t}$ be the components of $\xt{\mmp}{t}$, where $\xt{\mmmu}{t}\in\bR^{n_1}_+$ and $\xt{\mmgamma}{t}\in\bR^{n_2}_+$. In other words, $\xt{\mmp}{t} = (\xt{\mmmu}{t}; \xt{\mmgamma}{t})$. The convex program that the {\sf ORACLE} needs to solve to check whether $\xt{\cc{H}}{t}_+ \cap \cc{C} \cap \cc{S}_\alpha = \varnothing$ (recall~\eqref{eq:min_oracle}) is given by:
\[\begin{aligned}
    \max_{\mmw,s_1,s_2} \quad & (\mmP^T\mmw - s_1\mmone)\bullet \xt{\mmmu}{t} + ({-}\mmQ^T\mmw - s_2 \mmone)\bullet \xt{\mmgamma}{t}\\
    \subto\quad & s_1 + s_2 \ge \alpha, \quad \|\mmw\|_2 \le 1, \quad s_1, s_2 \le D,
\end{aligned}\]
which can be re-organized as:
\begin{equation}\label{eq:svm_oracle}
    \begin{aligned}
        \max_{\mmw, s_1, s_2} \quad & (\mmP\xt{\mmmu}{t} - \mmQ\xt{\mmgamma}{t})^T \mmw - \Tr(\xt{\mmmu}{t}) s_1 - \Tr(\xt{\mmgamma}{t}) s_2 \\
        \subto\quad  & s_1 + s_2 \ge \alpha, \quad \|\mmw\|_2 \le 1, \quad s_1, s_2 \le D.
    \end{aligned}
\end{equation}
This problem has the following analytical solution:
\[
    \xt{\mmw}{t} = \frac{\mmP\xt{\mmmu}{t} - \mmQ\xt{\mmgamma}{t}}{\|\mmP\xt{\mmmu}{t} - \mmQ\xt{\mmgamma}{t}\|_2},\quad
    \xt{s}{t}_1 = \begin{cases}
        D          & \text{if $\Tr(\xt{\mmmu}{t}) \le \Tr(\xt{\mmgamma}{t})$}, \\
        \alpha - D & \text{otherwise},
    \end{cases}\quad
    \xt{s}{t}_2 = \alpha - \xt{s}{t}_1.
\]

\begin{claim} 
Let $\xt{\mmp}{t}$ be the vector that parameterizes the separating hyperplane $\xt{\cc{H}}{t}$ in the $t$-th iteration, and let $\xt{\mmmu}{t}, \xt{\mmgamma}{t}$ be its components.
If the {\sf ORACLE} fails to return a point in $\xt{\cc{H}}{t}_+ \cap \cc{C} \cap \cc{S}_\alpha$,
then $(\frac{\xt{\mmmu}{t}}{\Tr(\xt{\mmmu}{t})}; \frac{\xt{\mmgamma}{t}}{\Tr(\xt{\mmgamma}{t})})$ is a solution of~\eqref{eq:pd} with objective value at most $\alpha$.
\end{claim}

We give two distinct proofs of the above claim:
\begin{itemize}
  \item The {\sf ORACLE} fails when the optimal value of~\eqref{eq:svm_oracle} is negative, i.e.
        \begin{equation}\label{eq:pd_separation}
            (\mmP\xt{\mmmu}{t})^T\mmw - \Tr(\xt{\mmmu}{t})\xt{s}{t}_1 < (\mmQ\xt{\mmgamma}{t})^T\mmw + \Tr(\xt{\mmgamma}{t})\xt{s}{t}_2 \quad \text{for all }\|\mmw\|_2 \le 1.
        \end{equation}
        If $\Tr(\xt{\mmmu}{t})\le \Tr(\xt{\mmgamma}{t})$, we have $\xt{s}{t}_1 = D$ and $\xt{s}{t}_2 = \alpha - D$. For the left-hand side of~\eqref{eq:pd_separation}, we have:
        \[
            \max_{\mmw : \|\mmw\|_2\le 1}(\mmP\xt{\mmmu}{t})^T \mmw - \Tr(\xt{\mmmu}{t})\xt{s}{t}_1 \le \max_{\mmu \in \cc{P}} \Tr(\xt{\mmmu}{t})\|\mmu\|_2 - \Tr(\xt{\mmmu}{t})D \le 0.
        \]
        Therefore, {if we} scale the left-hand and right-hand side of~\eqref{eq:pd_separation} by $\frac{1}{\Tr(\xt{\mmmu}{t})}$ and $\frac{1}{\Tr(\xt{\mmgamma}{t})}$ respectively
        the inequality still holds, 
        {and}
        we have:
        \[
            (\textstyle\frac{\mmP\xt{\mmmu}{t}}{\Tr(\xt{\mmmu}{t})})^T\mmw - \xt{s}{t}_1 <
            (\textstyle\frac{\mmQ\xt{\mmgamma}{t}}{\Tr(\xt{\mmgamma}{t})})^T \mmw + \xt{s}{t}_2 \quad \text{for all }\|\mmw\|_2 \le 1.
        \]
        Re-arranging the inequality gives:
        \[
            \left\|\frac{\mmP\xt{\mmmu}{t}}{\Tr(\xt{\mmmu}{t})} - \frac{\mmQ\xt{\mmgamma}{t}}{\Tr(\xt{\mmgamma}{t})}\right\|_2 < \xt{s}{t}_1 + \xt{s}{t}_2 = \alpha.
        \]
        Since ${\Tr(}\frac{\xt{\mmmu}{t}}{\Tr(\xt{\mmmu}{t})}{)} = {\Tr(}\frac{\xt{\mmgamma}{t}}{\Tr(\xt{\mmgamma}{t})}{)} = 1$, we conclude that $(\frac{\xt{\mmmu}{t}}{\Tr(\xt{\mmmu}{t})}; \frac{\xt{\mmgamma}{t}}{\Tr(\xt{\mmgamma}{t})})$ is a feasible solution to~\eqref{eq:pd} with objective value at most $\alpha$.

        One can get the same result using the same technique for the $\Tr(\xt{\mmmu}{t}) > \Tr(\xt{\mmgamma}{t})$~case.

  \item Since the optimal value of~\eqref{eq:svm_oracle} is negative, it is equivalent to {saying} that the optimal value of the following problem is less than $\alpha$:
    \begin{equation}\label{eq:pd_relaxed}
    \begin{aligned}
        \max_{\mmw, s_1, s_2} \quad & s_1 + s_2                                                                                                    \\
        \subto \quad                & (\mmP\xt{\mmmu}{t} - \mmQ\xt{\mmgamma}{t})^T \mmw - \Tr(\xt{\mmmu}{t}) s_1 - \Tr(\xt{\mmgamma}{t}) s_2 \ge 0 \\
                                    & \|\mmw\|_2 \le 1, \quad s_1, s_2 \le D.
    \end{aligned}
    \end{equation}
    Observe that any point $(\mmw; s_1; s_2)$ {that} satisfies the constraints
    \begin{equation}\label{eq:pd_restricted}
    \begin{aligned}
        &(\mmP\xt{\mmmu}{t})^T\mmw - \Tr(\xt{\mmmu}{t})s_1 \ge 0,\\
        &-(\mmQ\xt{\mmgamma}{t})^T\mmw - \Tr(\xt{\mmgamma}{t})s_2 \ge 0,\\
        &\|\mmw\|_2 \le 1,
    \end{aligned}
    \end{equation}
    also {satisfies} the constraints in~\eqref{eq:pd_relaxed}.
    Thus the constraints in~\eqref{eq:pd_restricted} {are} more {restrictive} than~\eqref{eq:pd_relaxed}, and the optimal value of the following problem is also less than $\alpha$:
    \begin{equation}\label{eq:pd_dual_construction}
    \begin{aligned}
        \max_{\mmw, s_1, s_2} \quad & s_1 + s_2\\
        \subto\quad &(\mmP\xt{\mmmu}{t})^T\mmw - \Tr(\xt{\mmmu}{t})s_1 \ge 0,\\
        &-(\mmQ\xt{\mmgamma}{t})^T\mmw - \Tr(\xt{\mmgamma}{t})s_2 \ge 0,\\
        &\|\mmw\|_2 \le 1.
    \end{aligned}
    \end{equation}
    The dual problem of~\eqref{eq:pd_dual_construction} is:
    \begin{equation}\label{eq:pd_primal_construction}
    \begin{aligned}
        \min_{z_1, z_2, \mmxi, \xi_0}\quad & \xi_0\\
        \subto\quad & {-}(\mmP\xt{\mmmu}{t})z_1 {+} (\mmQ\xt{\mmgamma}{t})z_2 - \mmxi = \mmzero,\\
        & \Tr(\xt{\mmmu}{t})z_1 = \Tr(\xt{\mmgamma}{t})z_2 = 1,\\
        & {z_1, z_2 \ge 0, \quad}\|\mmxi\|_2 \le \xi_0.
    \end{aligned}
    \end{equation}
    Let $(\tilde{z}_1; \tilde{z}_2; \tilde{\mmxi}; \tilde{\xi}_0)$ be the optimal solution to the above problem.
    It is not difficult to see that $\tilde{z}_1 = \frac{1}{\Tr(\xt{\mmmu}{t})}$ and $\tilde{z}_2 = \frac{1}{\Tr(\xt{\mmgamma}{t})}$. Since the optimal value is less than $\alpha$, we have:
    \[
        \|\tilde{\mmxi}\|_2 = \left\|{-}(\mmP\xt{\mmmu}{t})\tilde{z}_1 {+} (\mmQ\xt{\mmgamma}{t})\tilde{z}_2\right\|_2 \le \tilde{\xi}_0 < \alpha.
    \]
    As $\Tr(\tilde{z}_1\xt{\mmmu}{t}) = \Tr(\tilde{z}_2\xt{\mmgamma}{t}) = 1$, the above result implies that $(\tilde{z}_1\xt{\mmmu}{t}; \tilde{z}_2\xt{\mmgamma}{t})$ is a feasible solution to~\eqref{eq:pd} with objective value at most $\alpha$.
\end{itemize}

Next, we show that the width $\rho$ of the {\sf ORACLE} is at most 2D. As explained above, at each iteration the {\sf ORACLE} returns a point $(\mmw; s_1; s_2)$ that satisfies
\beq\label{eq:svm_oracle_constr_1}
    \|\mmw\|_2 \le 1, \ s_1 \le D, \ s_2 \le D, \ s_1 + s_2 \ge \alpha.
\eeq
The last constraint, together with the upper bounds of $s_1$ and $s_2$ implies
\beq\label{eq:svm_oracle_constr_2}
    s_1 \ge \alpha - s_2 \ge -D \text{ and } s_2 \ge \alpha - s_1 \ge -D.
\eeq
The width then equals the maximum value of
\[
    \Big|\lambda_k\big(\mmP^T\mmw - s_1 \mmone; -\mmQ^T \mmw - s_2 \mmone \big) \Big|,
\]
over all $k\in [r]$ and $(\mmw; s_1; s_2)$ that satisfies~\eqref{eq:svm_oracle_constr_1} and~\eqref{eq:svm_oracle_constr_2}, which is
\[
    \max\Big\{ \max_{\mmu\in\cc{P}} \big|\mmu^T \mmw - s_1\big|, \
    \max_{\mmv\in\cc{Q}} \big|-\mmv^T \mmw - s_2 \big| \Big\}.
\]
Finally, we have
\[\begin{aligned}
    & \max_{\mmu\in \cc{P}}\big|\mmu^T \mmw - s_1\big| \overset{(a)}{\le} \max_{\mmu \in \cc{P}} \big|\mmu^T \mmw\big| + |s_1| \overset{(b)}{\le} D + |s_1| \overset{(c)}{\le} 2D,\\
    \text{and}\ & \max_{\mmv\in \cc{Q}}\big|-\mmv^T \mmw - s_2\big| \overset{(a)}{\le} \max_{\mmv \in \cc{Q}} \big|\mmv^T \mmw\big| + |s_2| \overset{(b)}{\le} D + |s_2| \overset{(c)}{\le} 2D,
\end{aligned}\]
where $(a)$ are due to the triangle inequality, $(b)$ follow from~\eqref{eq:svm_oracle_constr_1} and the definition of $D$, and $(c)$ are due to the bounds on $s_1$ and $s_2$ in~\eqref{eq:svm_oracle_constr_1} and~\eqref{eq:svm_oracle_constr_2}.

By Theorem~\ref{thm:min_meta} we know that if we execute the primal-dual meta-algorithm for $T = \left\lceil \frac{4 R^2 \rho^2 \ln r}{\eps^2 \alpha^2} \right\rceil$ iterations, we either correctly conclude infeasibility or we find a solution with value $(1-\eps)\alpha$. For the latter case, a hyperplane with normal vector $\bar{\mmw} = {1\over T}\sum_t \xt{\mmw}{t}$ can separate the point sets $\cc{P}$ and $\cc{Q}$ with a margin at least $(1-\eps)\alpha$. As discussed above, for problem~\eqref{eq:svm_new_form}, we have $R = 2, \rho\le 2D$ and $r = n$. The number of iterations $T$ can be simplified to $\left\lceil \frac{64 D^2\ln n}{\eps^2\alpha^2} \right\rceil$.

We now analyze the overall running time of the algorithm for solving the optimization problem.
Since we obtained lower and upper bounds for $\OPT$, where $L_0 = 0$ and $U_0 = 2D$. The technique that reduces optimization problems to $\alpha$-FTP introduced in Section~\ref{sec:algorithm} applies. The total number of iterations required for solving the optimization problem is $O(\frac{D^2\log n}{\eps^2\OPT^2})$. Define $E = \frac{D^2}{\OPT^2}$ as a instance-specific parameter. The iteration complexity can be simplified to $O(\frac{E\log n}{\eps^2})$.
The computation in each iteration of our algorithm includes:
$(a)$ Solving~\eqref{eq:svm_oracle} for test of separation. As in our previous discussion, the optimal solution to~\eqref{eq:svm_oracle} has an analytical solution. Computing the optimal solution and its objective value takes $O(nd)$ time.
$(b)$ Performing a {SCMWU} step for adjusting the hyperplane. This involves computing the loss vector, and performing the update and the normalization step. Since the vector $\xt{\mmp}{t}$ lies in $\bR^n_+$, this also requires $O(nd)$ time. 
Combining the iteration complexity and the running time for each iteration, we conclude that the total time complexity of our algorithm is $O(\frac{E \, nd \log n}{\eps^2})$.
\end{proof}

\begin{theorem}
The algorithm described in Theorem~\ref{thm:svm} can be parallelized under the work-span model.
The parallel algorithm takes $O\Big(\frac{End \log n}{\eps^2}\Big)$ work and has $O\Big(\frac{E\log(nd)\log n}{\eps^2}\Big)$ span.
\end{theorem}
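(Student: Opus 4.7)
The plan is to parallelize each iteration of the sequential algorithm from Theorem~\ref{thm:svm} using the primitives described in Section~\ref{sec:scmwu} (summation/extremum via the associative aggregation template, with $O(n)$ work and $O(\log n)$ span). The outer loop, which runs for $T = O\!\left(\tfrac{E \log n}{\eps^2}\right)$ iterations, is inherently sequential because $\xt{\mmp}{t+1}$ depends on $\xt{\mmy}{t}$. Consequently, the total work will match the sequential time complexity $O\!\left(\tfrac{End\log n}{\eps^2}\right)$, and the span will be $T$ times the per-iteration span, giving $O\!\left(\tfrac{E \log(nd)\log n}{\eps^2}\right)$.

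Within a single iteration, there are two computational blocks to parallelize. First, the oracle step that solves~\eqref{eq:svm_oracle} using the analytical formula for $\xt{\mmw}{t}, \xt{s}{t}_1, \xt{s}{t}_2$. The key quantities to compute are (i) the matrix-vector products $\mmP \xt{\mmmu}{t}$ and $\mmQ \xt{\mmgamma}{t}$, implemented as $d$ independent inner products of length $n_1$ and $n_2$ respectively, giving $O(nd)$ work and $O(\log n)$ span; (ii) the traces $\Tr(\xt{\mmmu}{t})$ and $\Tr(\xt{\mmgamma}{t})$, which are summations in $O(n)$ work and $O(\log n)$ span; and (iii) the Euclidean norm $\|\mmP\xt{\mmmu}{t} - \mmQ\xt{\mmgamma}{t}\|_2$, computable in $O(d)$ work and $O(\log d)$ span. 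Given these, the closed-form expressions for $\xt{\mmw}{t}, \xt{s}{t}_1, \xt{s}{t}_2$ involve only entrywise divisions and a constant-size comparison, all in $O(d)$ work and $O(1)$ span.

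Second, the SCMWU update for $\cc{K} = \R^n_+$ as specified in~\eqref{min_implement}. Forming the loss vector requires computing $\mmP^T \bar{\mmw}$ and $\mmQ^T \bar{\mmw}$ (or equivalently maintaining the running averages $\sum_\tau \mma_j \xt{y}{\tau}_j$ across iterations incrementally), which again decomposes into $n$ independent inner products of length $d$, giving $O(nd)$ work and $O(\log d)$ span. Because the underlying EJA is the nonnegative orthant, $\mathbf{exp}(\cdot)$ is entrywise exponentiation, which is embarrassingly parallel with $O(n)$ work and $O(1)$ span. The normalization constant $C$ in~\eqref{min_implement} is a summation of $n$ nonnegative scalars, computed in $O(n)$ work and $O(\log n)$ span, and the subsequent entrywise division is $O(n)$ work and $O(1)$ span.

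Aggregating these bounds, a single iteration costs $O(nd)$ work and $O(\log(nd))$ span. Multiplying by the iteration count $T = O(E \log n/\eps^2)$ yields the claimed $O\!\left(\tfrac{End\log n}{\eps^2}\right)$ work and $O\!\left(\tfrac{E\log(nd)\log n}{\eps^2}\right)$ span. There is no genuine obstacle here beyond careful bookkeeping: the only subtlety is to verify that every subroutine we invoke reduces either to entrywise arithmetic or to an associative aggregation (sum, max, inner product), so that the parallel aggregation primitive from Section~\ref{sec:scmwu} applies directly; this is the case because $\cc{K} = \R^n_+$ avoids the more involved exponentiation routines needed for PSD cones.
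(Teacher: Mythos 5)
Your proposal is correct and takes essentially the same route as the paper: parallelize the per-iteration computation (oracle solve and SCMWU update) using associative-aggregation primitives, accept the sequential outer loop of $T = O(E\log n/\eps^2)$ iterations, and multiply through. The paper's proof is terser (it largely defers to the analogous SES parallelization argument and just notes the per-iteration span is $O(\log(nd))$, plus a final $O(n)$-work/$O(\log n)$-span normalization of $\xt{\mmmu}{t},\xt{\mmgamma}{t}$ to extract the PD solution), whereas you fill in the explicit breakdown of the matrix-vector products, traces, norm, entrywise exponentiation, and normalization — all of which agree with the paper's bounds.
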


\begin{proof}
Similar to Theorem~\ref{thm:ses_parallel}, we parallelize the computation during each iteration of the algorithm for the SVM/PD problem.
This involves computing the analytical solution of~\eqref{eq:svm_oracle} and adjusting the hyperplane using SCMWU for an element in $\bR^n_+$. Under the work-span model, the work of the parallel algorithm is the same as the sequential time complexity, while the span of each iteration becomes $O(\log (nd))$. When the searching range is small and a solution of~\ref{eq:pd} is required, we only need to normalize the vectors $\xt{\mmmu}{t}$ and $\xt{\mmgamma}{t}$, which can be done in $O(n)$ work and $O(\log n)$ span. Therefore, the total work of the parallel algorithm is $O\Big(\frac{End \log n}{\eps^2}\Big)$ and the span is $O\Big(\frac{E\log(nd)\log n}{\eps^2}\Big)$.
\end{proof}

\section{Experimental results}\label{sec:experiment}

{\bf Implementation strategies.}
We implemented our algorithms for the SES and the SVM/PD problem in both sequential (CPU) and parallel (GPU) settings.
{The CPU versions are developed using C++ and denoted as PDSCP-ST, while the GPU versions use NVIDIA's CUDA programming framework~\cite{cuda} and are denoted as~PDSCP. }
For both CPU and GPU versions, we employ the same implementation strategies.
We examine the solution we obtained in each iteration. For the SES problem, letting $\xt{\bar{\mmu}}{t}$ be the mean of the first $t$ iterates, we compute
\[
    \xt{f}{t} \triangleq \max_{i\in [n]} \|\xt{\bar{\mmu}}{t} - \mmv_i\|_2 + \gamma_i.
\]
For the SVM problem, letting $\xt{\bar{\mmw}}{t}$ be the first $t$ iterates, we compute
\[
    \xt{f}{t} \triangleq \max\Big\{\textstyle\frac{1}{\|\xt{\bar{\mmw}}{t}\|_2} \displaystyle\big(\min_{\mmv\in \cc{P}} \mmv^T\xt{\bar{\mmw}}{t} - \max_{\mmu\in \cc{Q}} \mmu^T\xt{\bar{\mmw}}{t}  \big),\ 0\Big\}.
\]
If the current result is already better than our guess $\alpha$ (for SES, when the current radius $\xt{f}{t}$ is smaller than $\alpha$; and for SVM, when the current margin $\xt{f}{t}$ is larger than $\alpha$), we can terminate the process and get an affirmative conclusion for the $\alpha$-feasibility test problem. Moreover, instead of running the algorithms for a sufficiently large number of iterations as stated in their theoretical analyses, we use an early-stopping technique that is often used in practice,~and terminate the process when the result is stabilized. Specifically, in each iteration, we check~whether
\[
    \frac{|\xt{f}{t} - \xt{f}{t-1}|}{\xt{f}{t-1}} < \delta,
\]
where $\delta$ is a small constant (in our experiments, $\delta = 10^{-4}$).\footnote{Note that when $\xt{f}{t-1} = 0$, we don't compute the ratio and the algorithm continues.}
If the criterion has been satisfied consecutively (in our experiments, 10 consecutive iterations), the algorithm will be terminated.

{\bf Experimental setups.}
We compare the performance of our algorithms 
{against the two popular commercial softwares Gurobi Optimizer~\cite{gurobi} and IBM ILOG Cplex Optimizer~\cite{cplex}, as well as the computational geometry algorithms library CGAL~\cite{cgal}.}
The main results of Gurobi and Cplex are executed with the default settings of using the barrier method on up to 8 threads.
For simplicity, the input instances of the SES problem are randomly generated point sets (i.e.,~$\gamma_i = 0$), in which case the problem can also be formulated as the following quadratic program~\cite{ses_exact:gartner2000efficient}:
\beq\begin{aligned}\label{eq:ses_qp}
    \min_{\mmx, \mmy} \quad & \mmy^T \mmy - \sum_{i=1}^n \mmv_i^T \mmv_i x_i\\
    \subto \quad & \mmy = \sum_{i=1}^n \mmv_i x_i,\\
    & \mmone^T \mmx = 1,\\
    & \mmx \ge \mmzero.
\end{aligned}\eeq

As for the SVM/PD problem, the input instances are generated to have similar parameters $E$.
The problem also has a quadratic programming formulation:
\beq\label{eq:svm_qp}
\begin{aligned}
    \min_{\mmmu,\mmgamma} \quad & \mmmu^T\mmP^T\mmP\mmmu + \mmgamma^T \mmQ^T \mmQ \mmgamma - 2\mmmu^T\mmP^T\mmQ\mmgamma\\
    \subto\quad & \mmone^T \mmmu = 1,\ \mmone^T \mmgamma = 1,\\
    & \mmmu, \mmgamma \ge \mmzero.
\end{aligned}
\eeq

We test the performance of Gurobi and Cplex for solving the SES and SVM/PD problems using both the SOCP formulations (\eqref{eq:ses} and~\eqref{eq:svm}, marked with ``-SOCP'' in the figures) and the quadratic program formulations (\eqref{eq:ses_qp} and~\eqref{eq:svm_qp}, marked with ``-QP'' in the figures). 
To better understand
{the parallel speedup they get from running on 8 threads,}
we also include results on them running 
{on}
just a single thread. The ``-ST'' suffix is used to denote the single-thread variants of the solvers. 
The algorithms implemented in CGAL are sequential and simplex-based methods for quadratic programming.
On the other hand, 
we did not manage to find any publicly available software adapting the core-set methods 
{for inclusion}
in our studies.
For the SVM problem, almost all the other existing solvers are designed for the soft-margin variants, which we do not consider in this work. 
All the experiments are executed on a machine with an Intel Core i7-9700K CPU with 32GB memory and 8 cores, and an NVIDIA RTX 2080Ti GPU with 11GB memory and 4352 CUDA cores. 
Each resulting running time reported here is averaged over 10 randomly generated instances. 
We terminate any experiment that takes beyond 1000 seconds and do not include them in our figures.

{\bf Different input sizes.}
The left chart in Figure~\ref{fig:ses_varing_number} and Figure~\ref{fig:svm_varying_number} shows the running time of different solvers for solving the SES problem and the SVM problem respectively, while the input size $n$ (for SVM, $n_1$ and $n_2$) ranges from $2^{10}$ to $2^{20}$. 
In both charts, the running times are illustrated in a logarithmic scale, and the number of dimensions $d$ is fixed to $64$.
The right chart in Figure~\ref{fig:ses_varing_number} and Figure~\ref{fig:svm_varying_number} shows the ratio of the running time of the best competitive solver to that of PDSCP for the SES and SVM problems.
The results demonstrate that our GPU implementation PDSCP significantly outperforms the multi-threaded SOCP solvers Gurobi-SOCP and Cplex-SOCP, which use the same problem formulations.
With a slower growth rate to the problem size, PDSCP also outperforms the QP solvers Gurobi-QP and Cplex-QP (which are better than their corresponding SOCP solvers) as the input size increases.
This is evidenced in the right chart of Figure~\ref{fig:ses_varing_number} and Figure~\ref{fig:svm_varying_number}, where PDSCP runs faster than the fastest QP solver once the input size is larger than $2^{17}$ for the SES problem and $2^{14}$ for the SVM problem in our experiments.
The growth rate of the running time (w.r.t.~the increase in input size) of our sequential implementation PDSCP-ST is roughly the same as the sequential interior point solvers, which is reasonable as in the theoretical analyses our algorithms have a dependence of $O(n\log n)$ on the input size and the interior point methods have a dependence of $O(n^{1.5})$.
The results of CGAL do not show up in Figure~\ref{fig:ses_varing_number} since they exceeded the time limit of 1000 seconds.

\begin{figure}[t]
\includegraphics[width=\linewidth]{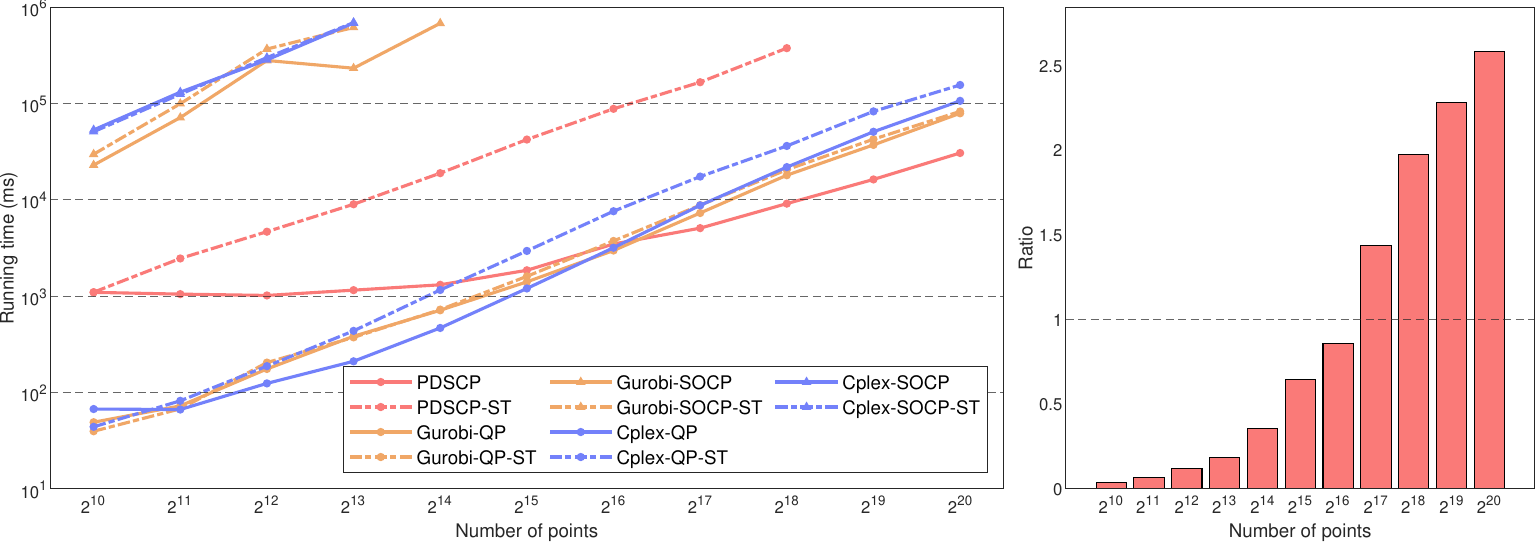}
\vspace*{-8mm}
\caption{(Left) Running time of different solvers with inputs of different sizes, and (Right) the ratio of the running time of the best competitive solvers to that of PDSCP for the {SES} problem.
}\label{fig:ses_varing_number}
\vspace{.5em}
\includegraphics[width=\linewidth]{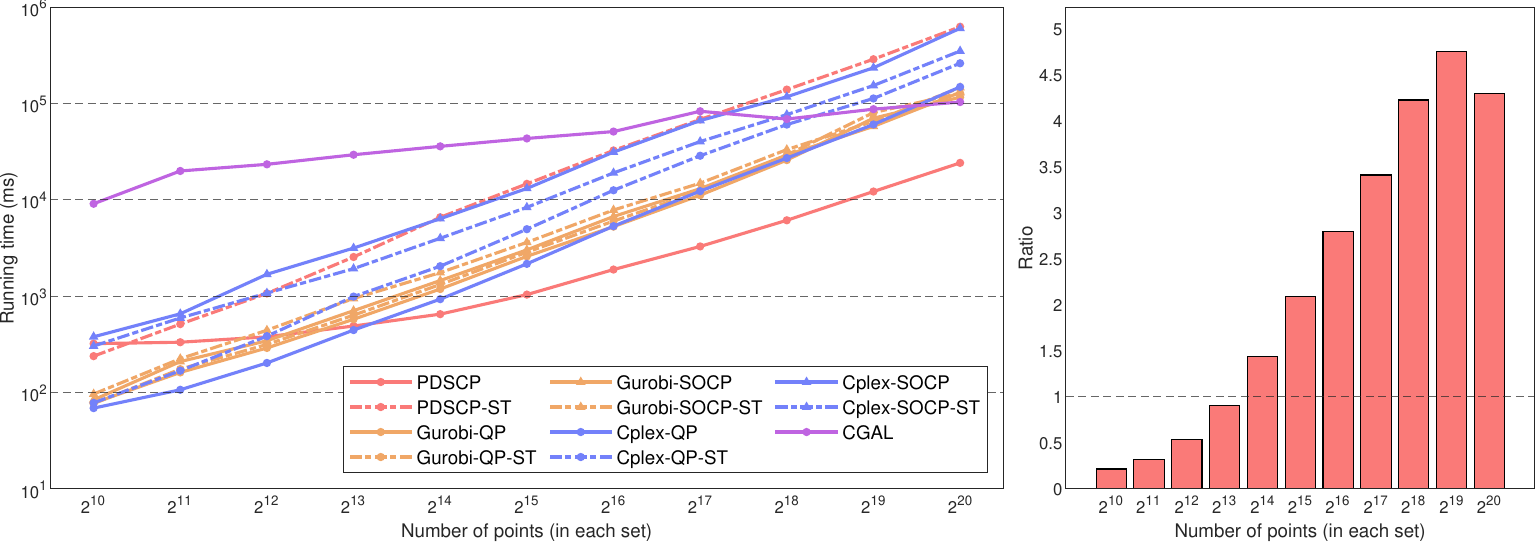}
\vspace*{-8mm}
\caption{ (Left) Running time of different solvers with inputs of different sizes, and (Right) the ratio of the running time of the best competitive solvers to that of PDSCP for the {SVM}~problem.}\label{fig:svm_varying_number}
\end{figure}

{\bf Different dimensionalities.}
The left charts in Figure~\ref{fig:ses_varing_dimension} and Figure~\ref{fig:svm_varying_dimension} show the running time of the solvers for solving the SES problem and the SVM problem respectively, with the dimensionality $d$ ranging from 2 to 512. In both figures, the running time is demonstrated on a logarithmic scale. For the SES problem, the number of input points $n$ is fixed to 100,000, and for the SVM problem, the number of points in each set ($n_1$ and $n_2$) is fixed to 100,000.
The right charts in Figure~\ref{fig:ses_varing_dimension} and Figure~\ref{fig:svm_varying_dimension} show the ratio of the running time of the best competitive solver to that of PDSCP for the SES and SVM problems.
The results demonstrate that PDSCP outperforms the multi-threaded SOCP solvers Gurobi-SOCP and Cplex-SOCP in most cases, which use the same problem formulations.
It also surpasses the QP solvers Gurobi-QP and Cplex-QP as the dimensionality increases. This is evidenced in the right chart of Figure~\ref{fig:ses_varing_dimension} and Figure~\ref{fig:svm_varying_dimension}, where PDSCP runs faster than the best QP solver once the dimensionality is larger than 128 for the SES problem and 16 for the SVM problem.
It is worth noting that PDSCP and PDSCP-ST have a significantly slower growth rate concerning the increase in dimensionality as compared with the other solvers (either the simplex-based solver of CGAL or interior point solvers of Gurobi and Cplex), which reflects the results we obtained from the theoretical analyses, where the running time of our algorithms have a $O(d)$ dependence on the number of dimensions and the dependences of simplex and interior point methods are super-quadratic.
It is reasonable to anticipate that the advantage of PDCSP will persist in higher dimensions.
The results for Gurobi-SOCP are not available because it runs out of memory during execution.

\begin{figure}[t]
\includegraphics[width=\linewidth]{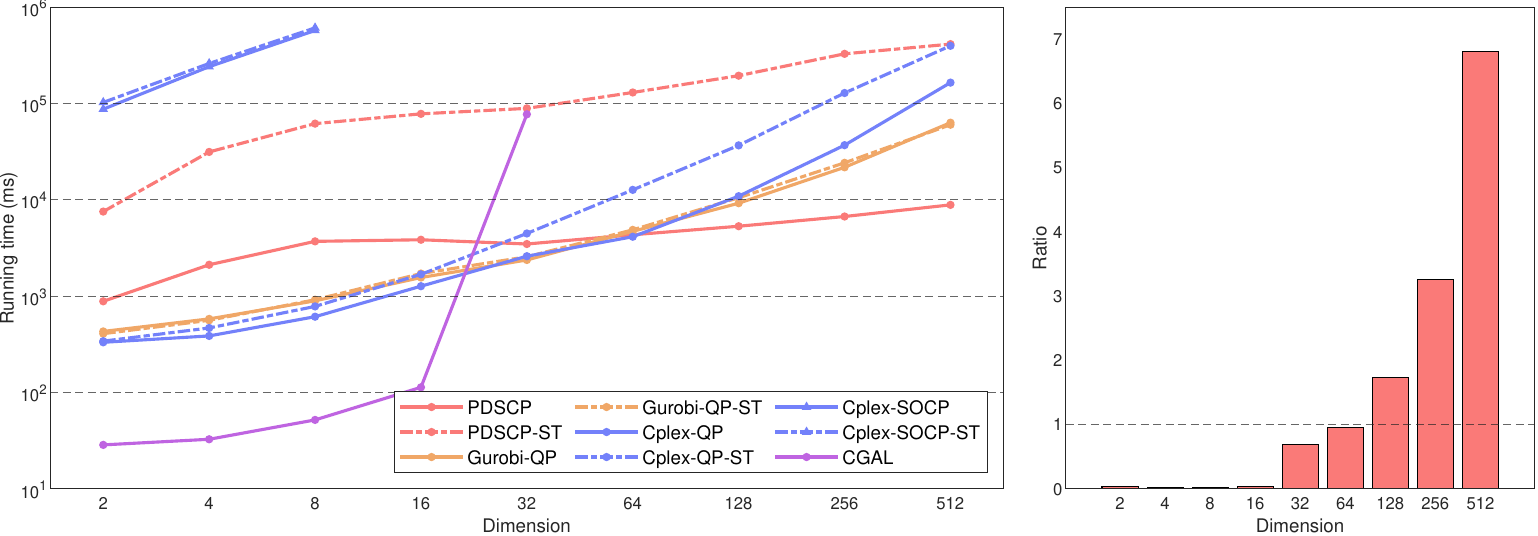}
\vspace*{-8mm}
\caption{(Left) Running time of the solvers under different dimensionalities, and (Right) the ratio of the running time of the best competitive solvers to that of PDSCP for the SES problem.}\label{fig:ses_varing_dimension}
\vspace{.5em}
\includegraphics[width=\linewidth]{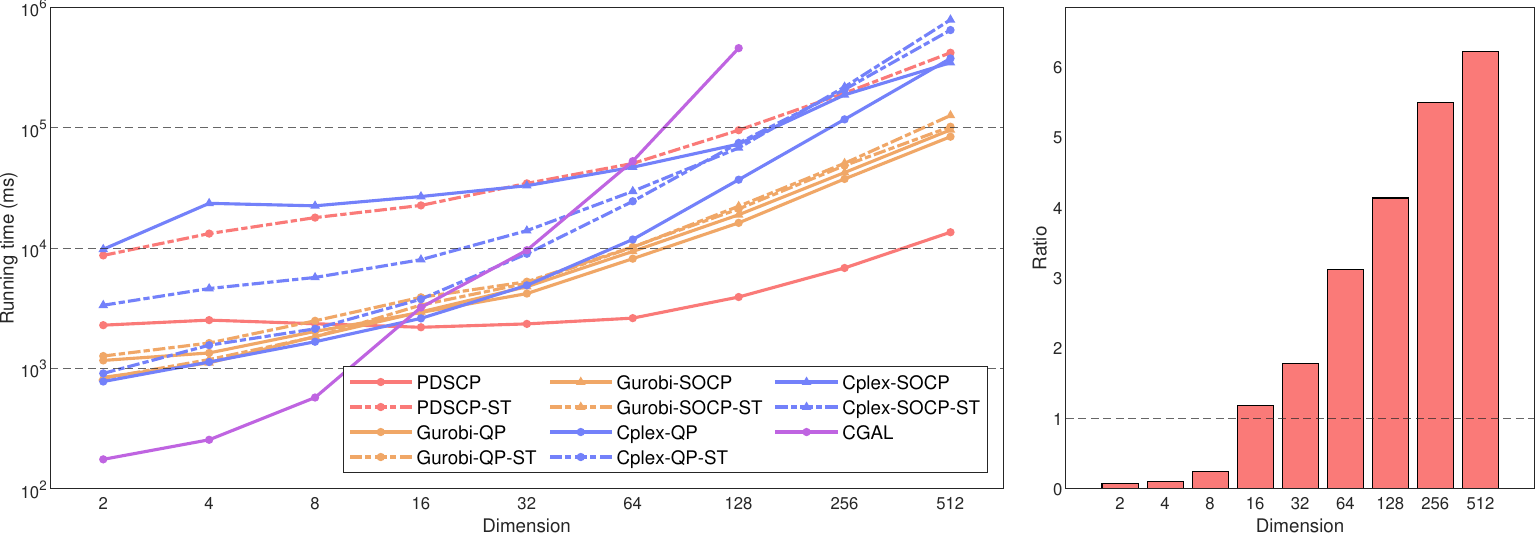}
\vspace*{-8mm}
\caption{(Left) Running time of the solvers under different dimensionalities, and (Right) the ratio of the running time of the best competitive solvers to that of PDSCP for the SVM problem.}\label{fig:svm_varying_dimension}
\end{figure}

{\bf Parallel speedup.}
The parallelism of a parallel solver is to be understood through its speedup, which is defined as the ratio of the running time of its single-thread version to the running time of its parallel version. From the left charts in Figure~\ref{fig:ses_varing_number}--\ref{fig:svm_varying_dimension}, it is evident that PDSCP demonstrates a significantly superior capability, achieving speeds over an order of magnitude faster than its sequential counterpart, PDSCP-ST.
On the other hand, the charts also depict that all the interior point solvers from Gurobi and Cplex do not seem to gain much benefit from parallelization.
The performances of their multi-thread versions are sometimes even worse than the sequential versions due to the extra overhead of multithreading. For example, the performance of Cplex-QP v.s.~Cplex-QP-ST in all the figures, and the performance of Gurobi-SOCP v.s.~Gurobi-SOCP-ST in Figure~\ref{fig:ses_varing_number} and Figure~\ref{fig:svm_varying_dimension}.

{\bf Quality of results.}
To measure the quality of the results of our algorithms, we compare them with the best result obtained from other solvers. Let $\hat{f}$ be the result obtained from PDSCP (for SES, the smallest $\xt{f}{t}$; and for SVM, the largest $\xt{f}{t}$), and let $f^*$ be the best result obtained from other solvers. For the SES problem, we define the error as $(\hat{f} - f^*) / f^*$; and for the SVM problem, we define the error as $(f^* - \hat{f}) / f^*$. Notice that the errors can have negative values, in which cases our results are better than the best results of the other solvers.
The average errors of PDSCP for the SES problem are given in Table~\ref{table:ses_error}, and the average errors for the SVM problem are shown in Table~\ref{table:svm_error}.
As we can see from the tables, the error exhibits a slow rate of growth as the input size increases.
The scenario concerning the increase of dimensionality differs. For the SES problem, the error increases as the dimensionality grows, but for the SVM problem, the error decreases.
In general, PDSCP produces a desirable result for most problem instances.
For the SES problem, the 80th percentile of the errors over all experimented cases is 0.0056; and for the SVM problem, the 80th percentile of the errors over all inputs is 0.0008.

\begin{table*}[t]
\centering
\caption{Average errors of PDSCP for the SES problem}
\resizebox{\textwidth}{!}{
\begin{tabular}{|c | ccccccccccc|}
\specialrule{1pt}{0.3em}{0.05em}
Number of points & $2^{10}$ & $2^{11}$ & $2^{12}$ & $2^{13}$ & $2^{14}$ & $2^{15}$ & $2^{16}$ & $2^{17}$ & $2^{18}$ & $2^{19}$ & $2^{20}$\\
\hline
Average error & 0.0019 & 0.0021 & 0.0023 & 0.0024 & 0.0025 & 0.0029 & 0.0031 & 0.0042 & 0.0041 & 0.0044 & 0.0055\\ 
\specialrule{1pt}{0.05em}{0.2em}
\end{tabular}
}
\resizebox{0.9\textwidth}{!}{
\begin{tabular}{|c | ccccccccc|}
\specialrule{1pt}{0em}{0.05em}
Number of dimensions & 2 & 4 & 8 & 16 & 32 & 64 & 128 & 256 & 512\\
\hline
Average error & 0.0003 & 0.0009 & 0.0008 & 0.0012 & 0.0024 & 0.0027 & 0.0058 & 0.0085 & 0.0098
\\ 
\specialrule{1pt}{0.05em}{0em}
\end{tabular}
}
\label{table:ses_error}
\end{table*}

\begin{table*}[t]
\centering
\caption{Average errors of PDSCP for the SVM problem}
\resizebox{\textwidth}{!}{
\begin{tabular}{|c | ccccccccccc|}
\specialrule{1pt}{0.3em}{0.05em}
Number of points & $2^{10}$ & $2^{11}$ & $2^{12}$ & $2^{13}$ & $2^{14}$ & $2^{15}$ & $2^{16}$ & $2^{17}$ & $2^{18}$ & $2^{19}$ & $2^{20}$\\
\hline
Average error & 0.0004 & 0.0004 & 0.0005 & 0.0005 & 0.0006 & 0.0002 & 0.0006 & 0.0006 & 0.0007 & 0.0007 & 0.0007
\\ 
\specialrule{1pt}{0.05em}{0.2em}
\end{tabular}
}
\resizebox{0.9\textwidth}{!}{
\begin{tabular}{|c | ccccccccc|}
\specialrule{1pt}{0em}{0.05em}
Number of dimensions & 2 & 4 & 8 & 16 & 32 & 64 & 128 & 256 & 512\\
\hline
Average error & 0.0025&0.0023&0.0015&0.0011&0.0008&0.0006&0.0006&0.0006&0.0007
\\ 
\specialrule{1pt}{0.05em}{0em}
\end{tabular}
}
\label{table:svm_error}
\end{table*}

\section{Concluding remark}\label{sec:conclusion}
In this work, we introduce a novel primal-dual framework for symmetric cone programming (SCP)
{that utilizes}
a recent extension of the multiplicative weights update method to symmetric cones. 
We applied this framework to devise parallel algorithms for smallest enclosing sphere and support vector machine (polytope distance).
Our implementation of these two algorithms, tested in both sequential and parallel settings, demonstrated excellent efficiency and scalability with large-scale inputs.
Looking forward, the versatility of SCP allows our framework to have broad applications in both theoretical and practical aspects.
Besides the width-dependent method proposed in this paper, width-independent algorithms for special classes of SCPs (such as positive SCPs) would also be an interesting topic to~study.

\bibliography{bibtex}

\end{document}